 \newtheorem{thm}{Theorem}[section]
 \newtheorem{cor}[thm]{Corollary}
 \newtheorem{prop}[thm]{Proposition}
 \theoremstyle{definition}
 \newtheorem{defn}[thm]{Definition}
 \theoremstyle{remark}
 \newtheorem{rem}[thm]{Remark}
 \newtheorem{ex}[thm]{Example}
 \numberwithin{equation}{section}
\newcommand{\half}{\frac{1}{2}}
\def\SU2{{{\rm SU(2)}}}
\newcommand{\er}{\mathbb{R}}
\newcommand{\tn}{\mathbb{T}^n}
\newcommand{\ern}{{\mathbb{R}}^n}
\newcommand{\bi}{\begin{itemize}}
\newcommand{\ei}{\end{itemize}}
\newcommand{\be}{\begin{enumerate}}
\newcommand{\ee}{\end{enumerate}}
\newcommand{\beq}{\begin{equation}}
\newcommand{\eq}{\end{equation}}
\newcommand{\lapm}{\Delta_M}
\def\SO3{{{\rm SO(3)}}}
\def\p#1{{\left({#1}\right)}}
\DeclareMathOperator{\Tr}{Tr}
\def\Rn{{{\mathbb R}^n}}
\def\Rm{{{\mathbb R}^m}}
\def\Tn{{{\mathbb T}^n}}
\def\Zn{{{\mathbb Z}^n}}
\def\Zm{{{\mathbb Z}^m}}
\def\T{{{\mathbb T}^1}}
\def\TT{{{\mathbb T}^2}}
\def\C{{{\mathbb C}}}
\def\SU2{{{\rm SU(2)}}}
\def\lapsu2{{{\mathcal L}_\SU2}}
\begin{document}

\title[Schatten-von Neumann classes of integral operators]{Schatten-von Neumann classes of integral operators}

\author[J. Delgado]{Julio Delgado}

\address{Julio Delgado: 
  \endgraf
 Universidad del Valle \\
Departamento de Matem\'aticas\\
Colombia}

\email{delgado.julio@correounivalle.edu.co}

\author[M. Ruzhansky]{Michael Ruzhansky}
\address{
  Michael Ruzhansky:
  \endgraf
  Department of Mathematics: Analysis, Logic and Discrete Mathematics
  \\
  Ghent University, Belgium
  \endgraf
  and
  \endgraf
  School of Mathematical Sciences \\
    Queen Mary University of London
  \\
  United Kingdom}
  \email{michael.ruzhansky@ugent.be}
  
\date{\today}

\begin{abstract}
In this work we establish  sharp kernel conditions ensuring that the  corresponding  integral operators belong to Schatten-von Neumann classes. The conditions are given in terms of the spectral properties of operators acting on the kernel. As applications we establish several criteria in terms of different types of differential operators and their spectral asymptotics in different settings: compact manifolds, operators on lattices, domains in ${\mathbb R}^n$ of finite measure, and conditions for operators on ${\mathbb R}^n$ given in terms of anharmonic oscillators. We also give examples in the settings of compact sub-Riemannian manifolds, contact manifolds, strictly pseudo-convex CR manifolds, and (sub-)Laplacians on compact Lie groups.
\end{abstract}
\subjclass{Primary 47G10,  58J40; Secondary 47B10, 22E30.}

\keywords{Schatten-von Neumann classes; trace formula; singular values; integral operators. }

\maketitle
%%% --------
\tableofcontents

\section{Introduction}
Let $(\Omega_j,\mathcal{M}_j,\mu_j)$ ($j=1, 2$) be measure spaces respectively endowed with  a $\sigma$-finite measure $\mu_j$ on a $\sigma$-algebra $\mathcal{M}_j$ of subsets of $\Omega_j$. We denote $L^2(\mu_j):=L^2(\Omega_j,\mu_j)$ the complex Hilbert space of square integrable functions on $\Omega_j$. In this paper we give 
{\em sharp sufficient conditions} on integral kernels $K=K(x,y)$ in order to ensure that 
the corresponding integral operators 
\begin{equation}\label{EQ:int1}
Tf(x)=\int_{\Omega_1} K(x,y) f(y) d\mu_1(y)
\end{equation} 
from $L^2(\mu_1)$ into $L^2(\mu_2)$ belong to
different Schatten-von Neumann classes and in particular to the trace class when $(\Omega_1,\mathcal{M}_1,\mu_1)=(\Omega_2,\mathcal{M}_2,\mu_2)$.  When $\Omega=\Omega_1=\Omega_2$ possesses a Borel topological structure a trace formula  
in terms of the diagonal of the kernel can be deduced. If additionally $\Omega$ has a smooth manifold structure some sharp sufficient conditions on integral kernels $K(x,y)$ for Schatten-von Neumann classes can be formulated in terms of the regularity of a certain order in either $x$ or $y$, or both, and in terms of decay conditions at infinity.

To briefly explain the approach of this paper, we can summarise it as follows:
\begin{center}
{\em If we know spectral properties of an operator $E$ and we known how it acts on the integral kernel of an integral operator $T$, we can draw conclusions about the spectral properties of $T$.}
\end{center}
%\begin{tcolorbox}[colback=yellow!5!white,colframe=gray!85!black!75!black]
%If we know spectral properties of an operator $E$ and we know how it acts on the integral kernel of an integral operator $T$, we can draw conclusions about the spectral properties of $T$.
 %\end{tcolorbox}
More specifically, let $(E_2)_x$ and $(E_1)_y$ be operators acting on $x$ and $y$ variables, respectively, and suppose we know that
\begin{equation}\label{EQ:int2}
(E_2)_x(E_1)_yK\in L^2(\mu_2\otimes\mu_1),
\end{equation} 
or, more generally, belongs to mixed Lebesgue spaces.
In this paper we give {\em spectral} conditions on $E_2$ and $E_1$ ensuring that the integral  operator $T$ in \eqref{EQ:int1} belongs to the Schatten-von Neumann class $S_r(L^2(\mu_1), L^2(\mu_2))$, $0<r<\infty$. Such spectral conditions on $E_2$ and $E_1$ will be given:
\begin{itemize}
\item in terms of membership of their inverses in Schatten-von Neumann classes;
\item or in terms of their spectral asymptotics,
\end{itemize}
 i.e. conditions that can be verified in practice.
As an application we present several tests in terms of different types of operators in different settings. While the knowledge of the spectral asymptotics of an operator $E$ implies also the Schatten-von Neumann properties for its inverse, the advantage of knowing the spectral asymptotics will be in 
a possibility to obtain refined properties of the decay of the singular numbers of the integral operator $T$.

 The problem of finding such criteria on different kinds of domains is classical and has been much studied, e.g. the paper \cite{bs:sing} by Birman and Solomyak is a good introduction to the subject. In particular, it is well known that the smoothness of the kernel is related to the behaviour of the singular numbers. In a recent paper \cite{dr:suffkernel} the authors have established sufficient sharp kernel conditions for operators on $L^2(M)$ for a compact manifold $M$. 
 
In order to obtain criteria for general Schatten-von Neumann classes we will use the well-known method of factorisation introduced by Gohberg and Krein (cf. \cite[Chap. 3, Section 10]{gokr}).  Some applications of this method have been developed by O'Brien in \cite{OBrien:test} and in \cite{OBrien:test2} for applications to trace formulae of Schr{\"o}dinger operators. The factorisation method has also been applied by the authors in \cite{dr:suffkernel} in order to obtain Sobolev regularity conditions for kernels on closed manifolds. The main result of \cite{dr:suffkernel} has been recently applied in \cite{sm:dlay} for the study of the distribution of eigenvalues for the double layer potential on flat curves and its relation with isoperimetrical inequalities.

There is an extensive literature on Schatten-von Neumann properties for specific integral operators. A characterisation of the membership in Schatten-von Neumann classes for Hardy operators has been established by Nowak in \cite{kn:sch}. Lower bounds for the Schatten-von Neumann norms for Volterra integral operators have been obtained in  \cite{edstep:sch}, \cite{step:sch}. Schatten-von Neumann classes of pseudo-differential operators in the setting of the Weyl-H\"or\-man\-der calculus have been considered in 
\cite{Toft:Schatten-AGAG-2006}, \cite{Toft:Schatten-modulation-2008}, \cite{Buzano-Toft:Schatten-Weyl-JFA-2010} and for anharmonic oscillators in  \cite{anh:cdr}. Schatten-von Neumann classes on compact Lie groups and $s$-nuclear operators on $L^p$ spaces from the point of view of symbols have been respectively studied by the authors in \cite{dr13:schatten} and \cite{dr13a:nuclp}. 

In the subsequent part of the 
present paper we establish a
characterisation of Schatten-von Neumann classes for operators from $L^2(\mu_1)$ into $L^2(\mu_2)$ for general integral operators on general measure spaces.

The role of the smoothness of the kernel  in the investigation of Schatten-von Neumann properties can be illustrated with the following example. 
 In his classical book \cite[Prop 3.5, page 174]{sugiura:book} Mitsuo Sugiura gives a trace class criterion for integral operators on $L^2(\T)$ with $C^2$-kernels. More precisely, the theorem asserts that every kernel in $C^2(\TT)$ begets a trace class operator on $L^2(\T)$: if $K(\theta,\phi)$ is a $C^2$-function on $\TT$, then the integral operator $L$ on 
 $L^2(\T)$ defined by
\begin{equation}\label{EQ:Sugiura-trace-op} 
 (Lf)(\theta)=\int_0^{2\pi}K(\theta,\phi)f(\phi)d\phi,
\end{equation}
is trace class and has the trace
\begin{equation}\label{EQ:Sugiura-trace}
\Tr(L)=\int_0^{2\pi}K(\theta,\theta)d\theta.
\end{equation}
The proof of this result relies on the connection between the absolute convergence of Fourier coefficients of the kernel and the trace class property (traceability) of the corresponding operator. This result has been extended by the authors under sharp smoothness assumptions to the setting of compact manifolds  in \cite{dr:suffkernel}.   The authors have weakened the known
assumptions on the kernel for the operator to be trace class and for the trace formula
\eqref{EQ:Sugiura-trace} to hold. The present paper significantly extends the main results contained in \cite{dr:suffkernel} in different ways.
 
%for compact Lie groups  and in (\cite{dr:manifolds}) for compact manifolds.   
      
To formulate the notions more precisely, let $H_1, H_2$ be complex Hilbert spaces 
 and let $T:H_1\rightarrow H_2$ be a compact operator. Then $T^*T:H_1\rightarrow H_1$ 
 is compact, self-adjoint, and non-negative. Hence, we can define the absolute value of $T$ by the equality $|T|=(T^*T)^\half:H_1\rightarrow H_1$.  Let $(\psi_k)_k$ be an orthonormal basis for $H_1$ consisting of eigenvectors of $|T|$, and let $s_k(T)$ be the eigenvalue corresponding to the eigenvector 
$\psi_k$, $k=1,2,\dots$. The  numbers $s_1(T)\geq s_2(T)\geq \cdots \geq s_n(T)\geq\cdots \geq 0$, are called the singular values of $T:H_1\rightarrow H_2$.  If $0<p<\infty$ and the sequence of singular values is $\ell^p$-summable, then $T$ 
is said to belong to the Schatten-von Neumann class  ${S}_p(H_1, H_2)$. If $1\leq p <\infty$, a norm is associated to ${S}_p(H_1, H_2)$ by
 \[
 \|T\|_{S_p}:=\left(\sum\limits_{k=1}^{\infty}(s_k(T))^p\right)^{\frac{1}{p}}.
 \] 
If $1\leq p<\infty$ 
 the class $S_p(H_1, H_2)$ becomes a Banach space endowed with the norm $\|T\|_{S_p}$. If $p=\infty$ we define $S_{\infty}(H_1, H_2)$ to be the class of compact operators from $H_1$ into $H_2$ endowed  with the operator norm  
$\|T\|_{S_\infty}:=\|T\|_{op}$.
In the case $0<p<1$  the quantity $\|T\|_{S_p}$ only defines
 a  quasinorm, and $S_p(H_1, H_2)$ is also complete. If $H_1=H_2=H$
 we will simply write $S_p(H, H)=S_p(H)$.
 
The Schatten-von Neumann classes are nested, with
\begin{equation}\label{EQ:Sch-nested}
{S}_p\subset {S}_q,\,\,\textrm{ if }\,\, 0<p<q\leq\infty,
\end{equation}
and satisfy the important multiplication property (cf. \cite{weid:hi},  \cite{sim:trace}, \cite{gokr})
\beq\label{inn}{S}_q{S}_p\subset S_r,\eq
where
\beq\label{innf}\frac{1}{r}=\frac{1}{p}+\frac{1}{q},\qquad 0<p<q\leq\infty.\eq
The inclusion \eqref{inn} should be understood in the following sense: If $B\in S_p(H, H_1)$ and $A\in S_q(H_1, H_2)$ then $AB\in S_r(H, H_2)$ provided that \eqref{innf} holds. Moreover, one has 
\beq \label{innscbg}\|AB\|_{S_r(H, H_2)}\leq \|A\|_{S_q(H_1, H_2)} \|B\|_{S_p(H, H_1)}.\eq
The inequality \eqref{innscbg} can be obtained from Theorem 7.8 (c) of \cite{weid:hi},  and  a diagonalisation argument for $A$ and $B$.

%If $\frac{1}{p}+\frac{1}{q}\leq 1$ one can replace the constant $2^{\frac 1r}$ in the inequality \eqref{innscbg} by $1$ (cf. \cite{gokr}).

\medskip
A relationship between the singular values $s_n(T)$ and the eigenvalues $\lambda_n(T)$ in the case $H=H_1=H_2$ for an operator $T\in S_{\infty}(H)$ was established by Hermann Weyl (cf. \cite{hw:ineq}):

\[\sum\limits_{n=1}^{\infty}|\lambda_n(T)|^p\leq \sum\limits_{n=1}^{\infty} s_n(T)^p ,\quad p>0.\]

We will apply (\ref{inn}) for factorising our operators $T$ in the form $T=AB$ with $A\in S_q$ and $B\in S_p$, 
and from this we deduce that $T\in S_r$. We refer the reader to \cite[Chapter 7]{weid:hi} for more details on the Schatten-von Neumann  classes for operators acting on different Hilbert spaces $H_1, H_2$. Standard references for the special case $H_1=H_2$ are \cite{gokr}, \cite{r-s:mp}, \cite{sim:trace}, \cite{sch:id}. If
 $H=H_1=H_2$ and $T\in S_1(H)$ then $T$ is said to be in the {\em trace class} $S_1$.  Let $T:H\rightarrow H$ be an operator in $S_1(H)$ and let  $(\phi_k)_k$ be any orthonormal basis for $H$. Then, the series $\sum\limits_{k=1}^{\infty} \p{T\phi_k,\phi_k}$   is absolutely convergent and the sum is independent of the choice of the orthonormal basis $(\phi_k)_k$. Thus, we can define the trace $\Tr(T)$ of any linear operator
$T:H\rightarrow H$ in $S_1$ by 
$$
\Tr(T):=\sum_{k=1}^{\infty}\p{T\phi_k,\phi_k},
$$
where $\{\phi_k: k=1,2,\dots\}$ is any orthonormal basis for $H$. If the singular values
are square-summable $T$ is called a {\em Hilbert-Schmidt} operator. It is clear that every trace class operator is a Hilbert-Schmidt operator. A nice basic introduction to the study of the trace class is contained in the book \cite{lax:fa} by Peter Lax.  
 
If $H_j=L^2({\Omega}_j,{\mathcal{M}}_j,\mu_j)$ ($j=1,2$), it is well known that $T$ is a Hilbert-Schmidt operator from  $L^2({\Omega}_1,{\mathcal{M}}_1,\mu_1)$ into $L^2({\Omega}_2,{\mathcal{M}}_2,\mu_2)$ 
if and only if  $T$ can be represented by an integral kernel $K=K(x,y)\in L^2(\Omega_2\times\Omega_1,\mu_2\otimes\mu_1)$. 

We note that in view of \eqref{EQ:Sch-nested} the condition 
$K\in L^2(\Omega_2\times\Omega_1)$ implies that $T\in S_p$ for all $p\geq 2$.
 The situation for Schatten-von Neumann  classes $S_p$ for $p>2$ is indeed simpler and, 
 in fact, similar to that of
 $p=2$. In particular, B. Russo has proved in \cite{br:sch} that for $\mu_1=\mu_2=\mu$ on $\Omega$: If $1< p<2$, $\frac 1p+\frac 1{p'}=1$ and $K\in L^2(\Omega\times\Omega)$ then for the corresponding integral operator $T$ one has 
 \begin{equation}\label{EQ:Russo}
\|T\|_{S_{p'}}\leq\left(\|K\|_{p,p'}\|K^*\|_{p,p'}\right)^{\half}, 
\end{equation} 
 where $\|\cdot\|_{p,q}$ denotes the mixed-norm:
 \begin{equation}\label{EQ:mnorm}
\|K\|_{p,q}=\left(\int_{\Omega}\left(\int_{\Omega}|K(x,y)|^pd\mu(x)\right)^{\frac qp}d\mu(y)\right )^{\frac 1q},
\end{equation} 
and 
$K^*(x,y)=\overline{K(y,x)}$. 
The condition $K\in L^2(\Omega\times\Omega)$ in the above statement can be removed, see
Goffeng \cite{Goffeng}. Namely, even without such assumption, \eqref{EQ:Russo} still holds as long as its right hand side is finite. 
See also the discussion of such topics around \cite[Theorem 2.3]{RRS-JFA}.
 
 \medskip
For $p<2$, the situation is much more subtle, and
the Schatten-von Neumann class $S_p(L^2(\mu_1), L^2(\mu_2))$ cannot be characterised as in the case $p=2$ by a property analogous to the square integrability of integral kernels. This is a crucial fact that we now briefly describe. A classical result of 
Carleman \cite{car:ex} from 1916 gives the construction of a periodic {\em continuous} function 
$\varkappa(x)=\sum\limits_{k=-\infty}^{\infty}c_k e^{2\pi ikx}$ for which the Fourier coefficients $c_k$ satisfy
\begin{equation}\label{EQ:Carleman}
\sum\limits_{k=-\infty}^{\infty} |c_k|^p=\infty\qquad \textrm{ for any } p<2.
\end{equation}
Now, using this and considering the normal operator 
\begin{equation}\label{EQ:Carleman2}
Tf=f*\varkappa
\end{equation}
acting on $L^2(\T)$ 
one obtains that the sequence $(c_k)_k$ forms a complete system of eigenvalues of 
this operator corresponding to the complete orthonormal system 
$$\phi_k(x)=e^{2\pi ikx}, \quad T\phi_k=c_k\phi_k.$$ 
The system $\phi_k$ is also complete for $T^*$, $T^*\phi_k=\overline{c_k}\phi_k$, 
so that the singular values of $T$ are given by $ s_k(T)=|c_k|$, and
hence by \eqref{EQ:Carleman} we have
$$\sum\limits_{k=-\infty}^{\infty}s_k(T)^p=\infty \qquad \textrm{ for any } p<2.$$   
 In other words, in contrast to the case of the class ${S}_2$ of Hilbert-Schmidt operators which is characterised by the square integrability of the kernel, Carleman's result shows that below the index $p=2$ the class of kernels generating operators in the Schatten-von Neumann  class $S_p$  cannot be characterised by criteria of  the type $$ \iint |F(K(x,y))| dxdy<\infty ,$$
for any continuous function $F$ since the kernel $K(x,y)=\varkappa (x-y)$ of the operator $T$ in
 \eqref{EQ:Carleman2} satisfies any kind of integral condition of such form 
 due to the boundedness of $\varkappa$. 

This example demonstrates the relevance of obtaining criteria for operators to belong to Schatten-von Neumann  classes
 for $p<2$ and, in particular, motivates the results in this paper.  Among other things,
 we may also note that the continuity of the kernel (as in the above example)
 also does not guarantee that the operator would belong to any of the Schatten-von Neumann classes $S_p$ with $p<2$. Therefore, it is natural to ask what regularity imposed on the
 kernel would guarantee such inclusions (for example, the $C^2$ condition in Sugiura's
 result mentioned earlier does imply the traceability on $\mathbb T^1$). Thus, these
 questions will be the main interest of the present paper.

The main result for operators to belong to Schatten-von Neumann classes $S_{p}$ for $0<p<2$, is given in Theorem \ref{ext322}. In this work we allow singularities in the kernel so that the formula \eqref{EQ:Sugiura-trace}
would need to be modified in order for the integral over the diagonal to make sense. In such case, in order to calculate the trace of an integral operator using a non-continuous kernel along the diagonal, one idea is to average it to obtain an integrable function. Such an averaging process has been introduced by Weidmann \cite{weid:av} in the Euclidean setting, and applied by Brislawn in \cite{bri:k1}, \cite{bri:k2} 
for integral operators on $L^2(\ern)$ and on $L^2({\Omega},{\mathcal{M}},\mu)$, respectively, where $\Omega$ is a second countable topological space endowed with a $\sigma$-finite Borel measure. The corresponding extensions to the $L^p$ setting have been established in \cite{del:trace} and \cite{del:tracetop}. The $L^2$ regularity of such an averaging process
 is a consequence of the $L^2$-boundedness of the martingale maximal function. Denoting by $\widetilde{K}(x,x)$ the pointwise values of this averaging process, Brislawn  \cite{bri:k2} proved the following formula for a trace class operator $T$ on $L^2(\mu)$, for the extension to $L^p$ see \cite{del:trace}:
\beq\label{f1} \Tr(T)=\int_{\Omega} \widetilde{K}(x,x)d\mu(x).\eq

In Section \ref{SEC:Schatten-classes} we establish our criteria for Schatten-von Neumann  classes on measure spaces, and the special case of the trace class is treated in Section \ref{SEC:trace-class}. 
For this, we briefly recall the definition of the averaging process involved in formula (\ref{f1}).  In Section \ref{SEC:spas}  we present further criteria in terms of operators for which the distribution of eigenvalues is known in terms of the asymptotics of the eigenvalue counting functions. In Section \ref{SEC:appl} we give applications of the obtained conditions in different settings.

\smallskip
The authors would like to thank Alberto Parmeggiani for a discussion.

%\section{Preliminaries}  
%\label{SEC:Prem}

%\begin{prop}\label{prop1qg} Let $F$ be a  positive and compact operator   on $L^2(\mu)$ such that $F\in S^p(L^2(\mu))$ for some $p>0$. If $0<\nu, q<\infty$ and   $\nu q\geq p$, then $F^{\nu}\in S^q(L^2(\mu))$. 
%\end{prop}
%\begin{proof} First we note that since 
%\end{proof}

\section{Schatten-von Neumann classes on $L^2$-spaces}  
\label{SEC:Schatten-classes}

In this section we present our results in the setting of $L^2$-spaces which is not restrictive in terms of the general theory of Hilbert spaces.
 Before stating our first result, we point out that a look at the proof of the trace formula
\eqref{EQ:Sugiura-trace} in \cite[Prop 3.5]{sugiura:book} shows 
that statement can be already improved in the following way:
\begin{prop} \label{ext1} 
Let $\Delta=\frac{\partial ^2}{\partial\theta ^2}+\frac{\partial ^2}{\partial\phi ^2} $ be the Laplacian on $\TT$. 
Let $K(\theta,\phi)$ be a measurable function on $\TT$ and suppose that there exists an integer $q>1$ such that 
$\displaystyle{\Delta^{\frac{q}{2}}}K\in L^2(\T\times\T)$. Then the integral operator $L$ on 
 $L^2(\T)$ defined by
 \[(Lf)(\theta)=\int_0^{2\pi}K(\theta,\phi)f(\phi)d\phi,\]
is trace class and has the trace
\[\Tr(L)=\int_0^{2\pi}\widetilde{K}(\theta,\theta)d\theta,\]
where $\widetilde{K}$ stands for the averaging process described in Section \ref{SEC:trace-class}.
\end{prop}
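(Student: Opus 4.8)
The plan is to reduce the statement to the classical criterion that an operator on $L^2(\T)$ whose kernel has absolutely summable Fourier coefficients is trace class, with trace given by summing the diagonal Fourier coefficients. First I would expand the kernel in a double Fourier series on $\TT$, writing $K(\theta,\phi)=\sum_{m,n\in\zet} \widehat{K}(m,n) e^{im\theta}e^{in\phi}$, and observe that the hypothesis $\Delta^{q/2}K\in L^2(\TT)$ translates, via Parseval, into
\[
\sum_{m,n\in\zet}(m^2+n^2)^{q}\,|\widehat{K}(m,n)|^2<\infty.
\]
The operator $L$ with kernel $K(\theta,\phi)$ — note the sign/variable convention: $Lf(\theta)=\int K(\theta,\phi)f(\phi)\,d\phi$ pairs $e^{in\phi}$ with the coefficient $\widehat{K}(m,-n)$ — has matrix entries with respect to the orthonormal basis $\{(2\pi)^{-1/2}e^{ik\theta}\}_{k}$ given (up to a harmless normalisation constant) by $\widehat{K}(m,-n)$, so $L$ is, modulo a reflection, unitarily equivalent to the operator whose matrix is $(\widehat{K}(m,-n))_{m,n}$.

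The key step is then to show $\sum_{m,n}|\widehat{K}(m,n)|<\infty$, which already gives trace class by comparing $L$ with the rank-one building blocks $e^{im\theta}\langle\,\cdot\,,e^{-in\phi}\rangle$ each of $S_1$-norm $2\pi|\widehat{K}(m,n)|$ and invoking the triangle inequality in $S_1$. To get absolute summability from the weighted $\ell^2$ bound above, I would apply Cauchy--Schwarz:
\[
\sum_{m,n}|\widehat{K}(m,n)|
=\sum_{m,n}(m^2+n^2+1)^{-q/2}\,(m^2+n^2+1)^{q/2}|\widehat{K}(m,n)|
\le \Big(\sum_{m,n}(m^2+n^2+1)^{-q}\Big)^{1/2}\Big(\sum_{m,n}(m^2+n^2+1)^{q}|\widehat{K}(m,n)|^2\Big)^{1/2}.
\]
The second factor is finite by hypothesis (after absorbing the low-frequency term $m=n=0$), and the first factor $\sum_{m,n\in\zet}(m^2+n^2+1)^{-q}$ converges precisely because $2q>2$, i.e. because $q>1$ — this is where the integer $q>1$ assumption is used, and it is the only real constraint. (In fact any real $q>1$ would do; integrality is not essential here but is kept for simplicity.)

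For the trace formula, once $L\in S_1$ I would compute $\Tr(L)=\sum_k \langle L\phi_k,\phi_k\rangle$ in the exponential basis, which picks out $\sum_m \widehat{K}(m,-m)\cdot$const, i.e. the $\theta=\phi$ Fourier content of $K$; this equals $\int_0^{2\pi}g(\theta)\,d\theta$ where $g$ is the $L^1$ function on $\T$ with Fourier coefficients $\widehat{K}(m,-m)$, and by the martingale-averaging construction recalled in Section \ref{SEC:trace-class} this $g$ is exactly $\widetilde{K}(\theta,\theta)$ (the averaged restriction to the diagonal agrees $\mu$-a.e.\ with the function having those Fourier coefficients, since $K$ is regular enough — indeed continuous, as $\sum|\widehat{K}(m,n)|<\infty$ — that the diagonal restriction is unambiguous and coincides with its averaged version). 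I expect the only delicate point to be this identification of $\sum_m\widehat{K}(m,-m)e^{im\theta}$ with $\widetilde{K}(\theta,\theta)$; but since absolute summability of $\widehat{K}$ forces $K\in C(\TT)$, the averaging process returns the genuine pointwise diagonal values and the identification is immediate, so really the whole proof is the Cauchy--Schwarz estimate above together with standard facts about Fourier series of trace class convolution-type operators on the torus.
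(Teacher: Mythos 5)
Your proof is correct and follows essentially the same route the paper intends: the paper derives Proposition \ref{ext1} by inspecting Sugiura's argument, which rests exactly on the implication ``absolutely summable Fourier coefficients of the kernel $\Rightarrow$ trace class,'' and your Cauchy--Schwarz step from the weighted $\ell^2$ bound to $\ell^1$ summability (using $2q>2$) together with the continuity of $K$ to identify $\widetilde{K}$ with the genuine diagonal values is precisely the required upgrade. No gaps.
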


We observe that for $K\in L^2(\mu_2\otimes\mu_1)$, we have
\[\|K\|_{L^2(\mu_2\otimes\mu_1)}^2=\int_{\Omega_2\times\Omega_1}|K(x,y)|^2d\mu_2(x)\mu_1(y)=\int_{\Omega_1}\left(\int_{\Omega_2}|K(x,y)|^2d\mu_2(x)\right)d\mu_1(y),\]
or we can also write 
\[K\in L^2(\mu_2\otimes\mu_1)\iff K\in L_y^2(\mu_1, L_x^2(\mu_2)) .\]
In particular, this also means that $K_y$ defined by $K_y(x)=K(x,y)$ is well-defined for almost every $y\in\Omega_1$ as a function in  $L_x^2(\mu_2)$. For an operator $E$ from $L^2(\mu_1)$ into $L^2(\mu_1)$ we will use the notation $E_x K(x,y)$ to emphasize that the operator $E$ is acting on the $x$-variable. Analogously, we will also use the notation $E_y K(x,y)$ for an operator $E$ from $L^2(\mu_2)$ into $L^2(\mu_2)$ acting on the $y$-variable.

%We will also use the following notation in the paper. Given a linear operator $A$ from  
% $L^2(\mu_1)$ to $L^2(\mu_2)$, we define $\overline{A}:L^2(\mu_1)\rightarrow L^2(\mu_2)$ by $$\overline{A}\varphi:=\overline{A\overline{\varphi}}$$ for every $\varphi\in L^2(\mu_1)$.

In this paper we will only consider linear operators.    
We will now give our main criteria for Schatten-von Neumann classes where we note that we do not have to assume the operators $E_1, E_2$ to be self-adjoint nor bounded, i.e. they are considered to be densely defined without further explanations. For a densely defined operator $E$ on a Hilbert, it is well known that, $E$ has a bounded inverse if and only if $E$ is closed and bijective. Henceforth, an {\em invertible operator } is understood as an operator with bounded inverse.   An {\em unbounded operator} is understood as a densely defined operator.    
 
\begin{thm} \label{ext322} 
Let $(\Omega_j,\mathcal{M}_j,\mu_j)$ $(j=1, 2)$ be $\sigma$-finite measure spaces. Let $E_j$ $(j=1,2)$ be unbounded invertible  operators on  $L^2(\mu_j)$ such that $E_j^{-1}\in S_{p_j}(L^2(\mu_j))$ for some $p_j>0$.  Let 
$K\in L^2(\mu_2\otimes\mu_1)$ and let $T$ be the integral operator  from  
 $L^2(\mu_1)$ to $L^2(\mu_2)$ defined by
 \[(Tf)(x)=\int_{\Omega_1} K(x,y)f(y)d\mu_1(y).\]
Then the following holds:
\begin{enumerate}
\item[(i)] If $(E_2)_x(E_1)_yK\in L^2(\mu_2\otimes\mu_1)$, then $T$ belongs to the Schatten-von Neumann classes $S_r(L^2(\mu_1), L^2(\mu_2))$ for all $0<r<\infty$ such that
\[\frac 1r\leq\half+\frac 1{p_1}+\frac 1{p_2}.\]
Moreover,
\beq\label{par1}\|T\|_{S_r}\leq \|E_1^{-1}\|_{S_{p_1}}\|E_2^{-1}\|_{S_{p_2}}\|(E_2)_x(E_1)_yK\|_{L^2(\mu_2\otimes\mu_1)}.\eq

\item[(ii)] If $(E_2)_xK\in L^2(\mu_2\otimes\mu_1)$, then $T$ belongs to the Schatten-von Neumann classes $S_r(L^2(\mu_1), L^2(\mu_2))$ for all $0<r<\infty$ such that
\[\frac 1r\leq \half+\frac 1{p_2}.\]
Moreover,
\beq\label{par2}\|T\|_{S_r}\leq  \|E_2^{-1}\|_{S_{p_2}}\|(E_2)_xK\|_{L^2(\mu_2\otimes\mu_1)}.\eq

\item[(iii)] If $(E_1)_yK\in L^2(\mu_2\otimes\mu_1)$, then $T$ belongs to the Schatten-von Neumann classes $S_r(L^2(\mu_1), L^2(\mu_2))$ for all $0<r<\infty$ such that
\[\frac 1r\leq \half+\frac 1{p_1}.\]
Moreover,
\beq\label{par3}\|T\|_{S_r}\leq \|E_1^{-1}\|_{S_{p_1}}\|(E_1)_yK\|_{L^2(\mu_2\otimes\mu_1)}.\eq

\end{enumerate}
\end{thm}

\begin{rem} \label{REM:L2}
The condition that $K\in L^2(\mu_2\otimes\mu_1)$ in Theorem \ref{ext322} is not restrictive.
Indeed, conditions for $T\in S_r(L^2(\mu_1), L^2(\mu_2))$ for $r>2$ do not require regularity of $K$ and are given, for example, in \eqref{EQ:Russo}. The case $0<r<2$ is much more subtle (as the classes become smaller), but
if $T\in S_r(L^2(\mu_1), L^2(\mu_2))$ for $0<r<2$ then, in particular, $T$ is a Hilbert-Schmidt operator, and hence the condition $K\in L^2(\mu_2\otimes\mu_1)$ is actually necessary.

The statement of Theorem \ref{ext322} covers precisely the case $0<r<2$. Indeed, for example in Part (i), we have $r=\frac{2p_1p_2}{p_1p_2+2(p_1+p_2)}$ and hence we have $0<r<2$ 
since in general $0<p_1, p_2<\infty .$ Thus, Theorem \ref{ext322} provides
a sufficient condition for Schatten-von Neumann classes $S_r$ for $0<r<2$. 
\end{rem}
%\begin{rem} 
%We also note that the `dual' result with $\mu_{2}=0$ imposing no regularity of $K$
%with resect to $y$ also follows directly from it by considering the adjoint operator $T^{*}$
%and using the equality $\|T^{*}\|_{S_{r}}=\|T\|_{S_{r}}.$
%\end{rem}

\begin{proof}[Proof of Theorem \ref{ext322}] 

(i) For the sake of simplicity, sometimes we will abbreviate the notation also in integrals by writing $E_1=(E_1)_y$ and $E_2=(E_2)_x$. \\

We now consider the operator $A$ with kernel 
\[A(x,y)=(E_2)_x(E_1)_yK\in L^2(\mu_2\otimes\mu_1).\]
 One can see that
\beq A=E_2\circ T\circ {E_1^*}.\label{refnewAker}\eq 
Since $A\in S_2(L^2(\mu_1),L^2(\mu_2))$ and using the fact that $({E_1^*})^{-1}\in S_{p_1}(L^2(\mu_1))$ if  and only if $E_1^{-1}\in S_{p_1}$ with equal norms, we obtain 
\[T=E_2^{-1}\circ A\circ ({E_1^*})^{-1} \in S_{p_2}\circ S_2\circ S_{p_1}\subset S_r,\]
provided%
\[\frac 1r\leq\half+\frac 1{p_1}+\frac 1{p_2},\]
by \eqref{innf}.

Moreover, for the  estimation of the Schatten-von Neumann norm $\|T\|_{S_r}$, according to \eqref{innscbg} we obtain:   

\[\|T\|_{S_r}=\|E_2^{-1}\,\circ A\,\circ ({E_1^*})^{-1}\|_{S_r} \leq \|E_1^{-1}\|_{S_{p_1}}\|E_2^{-1}\|_{S_{p_2}}\|(E_2)_x(E_1)_yK\|_{L^2(\mu_2\otimes\mu_1)},\]

and this concludes the proof of (i).\\

%For all larger $r$ the statement follows from \eqref{EQ:Sch-nested}. \\

(ii) Just consider $E_2$ in the proof of (i). \\

(iii) Just consider $E_1$ in the proof of (i).
\end{proof}
\begin{rem}\label{remabsre7} (i) In the general setting of a separable Hilbert spaces $H$  one can construct operators $E$ satisfying the assumptions in the above theorem. Let $0<p<\infty$ and $(s_n)_n$ a sequence in $\ell^p$, with $s_n>0$ for all $n$. Let $\{\phi_n:n=1,2,\dots\}$ be an orthonormal basis of $H$.  \\

We define \[\mathcal{D}:=\{f\in H: \sum\limits_{n=1}^{\infty}|(f,\phi_n)_{H}|^2s_n^{2p}<+\infty\}.\]
Since  $Span(\{\phi_n:n=1,2,\dots\})\subset \mathcal{D}$, then $\mathcal{D}$ is dense in  
  $H$. We define $E\phi_n=s_n^{-p}\phi_n$. By using Cauchy-Schwarz inequality, we can see that $E$ can be extended to $\mathcal{D}$ by $Ef=\sum\limits_{n=1}^{\infty}(f,\phi_n)_{H}s_n^{p}\phi_n$. It is clear that $E$ has a bounded inverse determined by $E^{-1}\phi_n=s_n^{p}\phi_n$ and $E^{-1}\in S_p(H)$. We also note that since $\lim_ns_n^{p}=0$, we have $\lim_ns_n^{-p}=+\infty$ and  the operator $E$ is not bounded.\\
  
In more specific cases, we will consider more concrete operators for the applications.\\

(ii) We point out that a converse statement also holds for the multiplication property \eqref{inn} (cf. \cite[Theorem 7.9]{weid:hi}): Let $0<p,q, r<\infty$ and $T\in S_r(H, H_2)$ with 
\beq\label{innfww}\frac{1}{r}=\frac{1}{p}+\frac{1}{q}.\eq
Then there exist operators $B\in S_p(H, H_1)$ and $A\in S_q(H_1, H_2)$ (with some Hilbert space $H_1$) for which $T=AB$; the operators $A, B$ can be chosen such that $\|T\|_{S_r(H, H_2)}=\|A\|_{S_q(H_1, H_2)}\|B\|_{S_p(H, H_1)}$.
%This fact allows us to assert that 
\end{rem}
%\begin{rem} We note that if $E_1$ is self-adjoint  
%\end{rem}

\begin{rem} \label{EQ:fact}
Under conditions of Theorem \ref{ext322}, in the proof of its Part (i) the main point was to obtain the factorisation 
\begin{equation}\label{EQ:factmain}
T=E_2^{-1} A({E_1^*})^{-1},
\end{equation} 
where $A:L^2(\mu_1)\to L^2(\mu_2)$ is the integral operator with the integral kernel 
$A(x,y)=E_2 E_1 K(x,y).$
This factorisation has other consequences. For example, the combination of \eqref{EQ:factmain}, the condition \eqref{EQ:Russo} and the multiplication property imply the following extension of Theorem \ref{ext322} in the case $\mu_1=\mu_2=\mu$ on $\Omega$, where we will denote by 
$L^{q'}(\Omega, L^q(\Omega))$ the space defined by the mixed norm \eqref{EQ:mnorm}, that is, by
 \begin{equation}\label{EQ:mnorm2}
\|K\|_{L^{q'}(\Omega, L^q(\Omega))}=\left(\int_{\Omega}\left(\int_{\Omega}|K(x,y)|^ q d\mu(x)\right)^{\frac{q'}q}d\mu(y)\right )^{\frac{1}{q'}}<\infty.
\end{equation} 
We also use the notation $K^*(x,y):=\overline{K(y,x)}$.
\end{rem}

\begin{cor}\label{COR:extmain}
Let $(\Omega,\mathcal{M},\mu)$ be a $\sigma$-finite measure space. Let $T$ be a bounded  integral operator on   
 $L^2(\Omega)$, defined by
 \[(Tf)(x)=\int_{\Omega} K(x,y)f(y)d\mu(y).\]
Let $1<q\leq 2$ and $\frac1q+\frac{1}{q'}=1$. Then the following holds:
\begin{enumerate}
\item[(i)] Let $E_1,E_2$ be  unbounded invertible  operators on  $L^2(\Omega)$ such that $E_j^{-1}\in S_{p_j}(L^2(\Omega))$ for some $p_j>0$, $(j=1,2)$.  
If $(E_2)_x(E_1)_yK$ and $((E_2)_x(E_1)_yK)^*\in L^{q'}(\Omega, L^q(\Omega))$, then $T$ belongs to the Schatten-von Neumann classes $S_r(L^2(\Omega))$ for all $0<r<\infty$ such that
\[\frac 1r\leq \frac{1}{q'}+\frac 1{p_1}+\frac 1{p_2}.\]
Moreover,
\begin{multline}\label{pi1a}
\|T\|_{S_r}\leq 
\|E_1^{-1}\|_{S_{p_1}}\|E_2^{-1}\|_{S_{p_2}}\times \\
\times \left(\|(E_2)_x(E_1)_yK\|_{L^{q'}(\Omega, L^q(\Omega))}\|((E_2)_x(E_1)_yK)^*\|_{L^{q'}(\Omega, L^q(\Omega))}\right)^{\half}.
\end{multline} 

\item[(ii)] Let $E$ be an  unbounded invertible  operator on  $L^2(\Omega)$ such that $E^{-1}\in S_{p}(L^2(\Omega))$ for some $p>0$.
If $E_xK ,(E_xK)^*\in L^{q'}(\Omega, L^q(\Omega))$ or $E_yK ,(E_yK)^*\in L^{q'}(\Omega, L^q(\Omega))$, then $T$ belongs to the Schatten-von Neumann classes $S_r(L^2(\Omega))$ for all $0<r<\infty$ such that
\[\frac 1r\leq \frac{1}{q'}+\frac 1{p}.\]
Moreover, respectively one has
\beq \|T\|_{S_r}\leq 
\|E^{-1}\|_{S_{p}}\left(\|E_xK\|_{L^{q'}(\Omega, L^q(\Omega))}\|(E_xK)^*\|_{L^{q'}(\Omega, L^q(\Omega))}\right)^{\half},\label{pi1b}\eq
or
\beq \|T\|_{S_r}\leq 
\|E^{-1}\|_{S_{p}}\left(\|E_yK\|_{L^{q'}(\Omega, L^q(\Omega))}\|(E_yK)^*\|_{L^{q'}(\Omega, L^q(\Omega))}\right)^{\half},\label{pi1c}\eq
respectively.\\

\end{enumerate}
\end{cor} 
Since $L^{2}(\Omega, L^2(\Omega))=L^{2}(\Omega\times\Omega)$, Corollary \ref{COR:extmain} indeed is an extension of Theorem \ref{ext322} in the case of operators acting on the same space $L^2(\Omega)$. 

\begin{rem}\label{REM:Lpq}
We note that following the remarks after \eqref{EQ:mnorm} we do not need to assume in Corollary \ref{COR:extmain}  that $K\in L^2(\Omega\times\Omega)$.
Consequently, compared with the sufficient condition \eqref{EQ:Russo} by Russo and with 
Theorem \ref{ext322}, the Schatten-von Neumann  class index in Corollary \ref{COR:extmain}  can be larger than $2$. Indeed, compared with the argument in Remark \ref{REM:L2}, the condition on $r$ in Corollary \ref{COR:extmain} becomes $0<r<\frac{p_1 p_2}{p_1+p_2}$ in Part (i) and $0<r<p$ in Part (ii), respectively. Therefore, even for Schatten-von Neumann  classes $S_r$ with $r>2$, Corollary \ref{COR:extmain}
extends the sufficient condition \eqref{EQ:Russo} by Russo in the following sense:
%\begin{center}
%{\em For an integral operator to belong to the Schatten classes $S_r$ with $r>2$, the `size' condition \eqref{EQ:Russo} can be relaxed if we know that the integral kernel of an integral operator has additional `regularity' properties.}
%\end{center}
%\begin{tcolorbox}[colback=yellow!5!white,colframe=gray!85!black!75!black]
  For an integral operator to belong to the Schatten-von Neumann  classes $S_r$ with $r>2$, the `size' condition \eqref{EQ:Russo} can be relaxed if we know that the integral kernel of an integral operator has additional `regularity' properties.
%\end{tcolorbox}
\end{rem} 

%We also note that we do not assume that $K\in L^2(\Omega\times\Omega)$ since it follows from other assumptions of Corollary \ref{COR:extmain} thus yielding conditions for the membership in Schatten classes $S_r$ with $0<r\leq 2$.

\section{Trace class operators and their traces}
\label{SEC:trace-class}

In this section we consider the important case of the trace class operators. We start by deducing a corollary of Theorem \ref{ext322} in this special case.
 In order to establish a formula for the trace we will require an additional topological structure on $\Omega$. We will now briefly recall the averaging process which is required for the study of trace formulae for kernels with discontinuities along the diagonal. We start by defining the martingale
maximal function. Let $({\Omega},{\mathcal{M}},\mu)$ be a $\sigma$-finite measure space and
let $\{\mathcal{M}_j\}_{j}$ be a sequence of sub-$\sigma$-algebras such that
$$
\mathcal{M}_j\subset\mathcal{M}_{j+1}\,\,\textrm{ and }{\mathcal{M}}=\bigcup\limits_{j}\mathcal{M}_j.
$$
In order to define conditional expectations we assume that $\mu$ is $\sigma$-finite on each $\mathcal{M}_j$. In that case, 
if $f\in L^p(\mu)$, then $E(f|\mathcal{M}_n)$ exists. We say that a sequence $\{f_j\}_{j}$ of functions on $\Omega$ is a {\em martingale} if each $f_j$ is $\mathcal{M}_j$-measurable and
\beq E(f_j|\mathcal{M}_k)=f_k\,\mbox{ for }k<j.\eq
 In order to obtain a generalisation of the Hardy-Littlewood maximal function we consider the particular case of martingales generated by a single $\mathcal{M}$-measurable function $f$. The {\em martingale maximal function} is defined by
 \beq 
 Mf(x):=\sup\limits_{j}E(|f|\left .\right|\mathcal{M}_j)(x).
 \eq
This martingale can be defined, in particular, when the $\sigma$-algebra $\mathcal{M}$  
 is countably generated and it will allow to study the trace by mean of an averaging process on the diagonal of the kernel. However, this process is most effective for the computations in the case of a $\sigma$-algebra of Borel sets for a second countable topological space. Henceforth we will assume that $\Omega$ is a second countable topological space, $\mathcal{M}$ is the $\sigma$-algebra of Borel sets and $\mu$ is a $\sigma$-finite Borel measure. For our purposes in the study of the kernel the sequence 
of $\sigma$-algebras is constructed from a corresponding increasing sequence of 
partitions $\mathcal{P}_j\times\mathcal{P}_j$ of $\Omega\times\Omega$. 

Now, for each $(x,y)\in \Omega\times \Omega$ there is a unique $C_j(x)\times C_j(y)\in  \mathcal{P}_j\times\mathcal{P}_j$ containing $(x,y)$. Those sets $C_j(x)$ replace the cubes in $\ern$ in the definition of the classical Hardy-Littlewood maximal function. We refer to Doob 
\cite{doob:book1} for more details on the martingale maximal function
and its properties.\\

We denote by $A_j^{(2)}$ the averaging operators on $\Omega\times\Omega$:  Let $K\in
L_{loc}^1(\mu\otimes\mu)$, then the averaging $A_j^{(2)}$ is defined $\mu\otimes\mu$-almost everywhere (cf. \cite{bri:k2}) by 
\beq 
A_j^{(2)}K(x,y):=\frac{1}{\mu(C_j(x))\mu(C_j(y))}\int\limits_{C_j(x)}\int\limits_{C_j(y)}K(s,t)d\mu(t)d\mu(s).
\eq

The averaging process will be applied to the kernels $K(x,y)$ of our operators.  
As a consequence of the fundamental properties of the martingale maximal function it can be deduced that 
\begin{equation}\label{EQ:Kt}
\widetilde{K}(x,y):=\lim_{j\rightarrow\infty} A_j^{(2)}K(x,y)
\end{equation} 
is defined almost everywhere and that it agrees with $K(x,y)$ in the points of continuity.  
 We observe that if $K(x,y)$ is the integral kernel of a trace class operator, 
 then $K(x,y)$ is, in particular, square integrable on $\Omega\times\Omega.$ 
\medskip
A classical example with a discontinuous kernel is the Volterra 
operator $V$ on $L^2(I)$ where $I=[0,1]$. Its kernel is given by
\begin{equation*}
 K(x,y)=\left\{
\begin{array}{rl}
1\,\,\,\,;& y\leq x,\\
0\,\,\,\,;&x< y.
\end{array} \right.
\label{volt12}\end{equation*}
By averaging on cubes one can see that $\widetilde{K}(x,x)=\half$ for $0<x<1$. However,
 it is well known that its singular values are $s_n=2(\pi(2n+1))^{-1}$, hence $V$ is not a trace class operator.\\

In the sequel in this section, we can always assume that $K\in L^2(\mu\otimes\mu)$ since
it is not restrictive because the trace class is included in the Hilbert-Schmidt class, and
the square integrability of the kernel is then a necessary condition. 

As usual, we are using the notation $L^2(\mu)\equiv L^2(\Omega)$.

%In this section we shall introduce the martingale maximal function and some results from probability theory involving the averaging process above mentioned. For a more comprehensive accounts on these concepts, the book \cite{doob:mart} is a good reference. The classical references for the Maximal function of Hardy-Littlewood are \cite{ste:si} and \cite{ste:ha}. This process enable us to obtain a more general formula for the trace valid for every kernel of a nuclear operator. We will exploit the good properties of the particular kernel $k$ using the averaging process and the martingale maximal function.

\begin{cor} \label{ext32cor} 
Let 
 $(\Omega,\mathcal{M},\mu)$ be a measure space endowed with  a $\sigma$-finite measure $\mu$. Let $E_j$ $(j=1,2)$ be   unbounded  invertible operators on  $L^2(\Omega)$  such that $E_j^{-1}\in S_{p_j}(L^2(\Omega))$ for some $p_j>0$.  Let 
$K\in L^2(\Omega\times\Omega)$ and let $T$ be the integral operator  from  
 $L^2(\Omega)$ to $L^2(\Omega)$ defined by
 \[(Tf)(x)=\int_{\Omega} K(x,y)f(y)d\mu(y).\]
Let $1<q\leq 2$ and $\frac1q+\frac{1}{q'}=1$.
\begin{enumerate}
\item[(i)] 
If $(E_2)_x(E_1)_yK$ and $((E_2)_x(E_1)_yK)^*\in L^{q'}(\Omega, L^q(\Omega))$, then $T$ belongs to the trace class $S_1(L^2(\mu))$ provided that
\[1\leq \frac{1}{q'}+\frac 1{p_1}+\frac 1{p_2}.\]
Moreover, we have
\begin{multline}\label{pi1atr}
 \|T\|_{S_1}\leq \|E_1^{-1}\|_{S_{p_1}}\|E_2^{-1}\|_{S_{p_2}}\times
 \\ \times \left(\|(E_2)_x(E_1)_yK\|_{L^{q'}(\Omega, L^q(\Omega))}\|((E_2)_x(E_1)_yK)^*\|_{L^{q'}(\Omega, L^q(\Omega))}\right)^{\half}.
\end{multline}

In particular, if $(E_2)_x(E_1)_yK\in L^2(\Omega\times\Omega)$, then 
$T$ belongs to the trace class $S_1(L^2(\Omega))$ provided that
$\half=\frac 1{p_1}+\frac 1{p_2}.$ 
\item[(ii)] 
Let $E$ be an  unbounded invertible  operator on  $L^2(\Omega)$ such that $E^{-1}\in S_{p}(L^2(\Omega))$ for some $p>0$.
If $E_xK ,(E_xK)^*\in L^{q'}(\Omega, L^q(\Omega))$ or $E_yK ,(E_yK)^*\in L^{q'}(\Omega, L^q(\Omega))$, then $T$ belongs to the trace class $S_1(L^2(\mu))$ provided that
\[1\leq \frac{1}{q'}+\frac 1{p}.\]
Moreover, respectively one has
\beq \|T\|_{S_1}\leq \|E^{-1}\|_{S_{p}}\left(\|E_xK\|_{L^{q'}(\Omega, L^q(\Omega))}\|(E_xK)^*\|_{L^{q'}(\Omega, L^q(\Omega))}\right)^{\half},\label{pi1bb}\eq
or
\beq \|T\|_{S_1}\leq \|E^{-1}\|_{S_{p}}\left(\|E_yK\|_{L^{q'}(\Omega, L^q(\Omega))}\|(E_yK)^*\|_{L^{q'}(\Omega, L^q(\Omega))}\right)^{\half}.\label{pi1cc}\eq

In particular, if $E$ is an  unbounded  invertible operator on  $L^2(\Omega)$  such that $E^{-1}\in S_{2}(L^2(\Omega))$ and either $E_yK\in L^2(\Omega\times\Omega)$ or 
 $E_xK\in L^2(\Omega\times\Omega)$, then $T$ belongs to the trace class $S_1(L^2(\Omega))$. 

\item[(iii)] Moreover, assume additionally that $\Omega$ is a second countable topological space and  $(\Omega,\mathcal{M},\mu)$ is a measure space endowed with  a $\sigma$-finite Borel measure $\mu$. Then under any of the assumptions (i) or (ii), the operator $T$ is trace class on $L^2(\mu)$ and 
  its trace is given by
\begin{equation}\label{EQ:trace}
\Tr(T)=\int_{\Omega}\widetilde{K}(x,x)d\mu(x). 
\end{equation}
In particular, if $K$ is continuous on the diagonal one has
\beq\Tr(T)=\int_{\Omega}K(x,x)d\mu(x).\label{EQ:tracec}\eq
\end{enumerate}
\end{cor}

\begin{proof} By taking $r=1$ in the corresponding assumptions in Corollary \ref{COR:extmain} one can deduce (i) and (ii). For (iii) the fact that $T$ is trace class follows from the corresponding assumption (i) or (ii), and the  trace formula comes from (\ref{f1}), with $\widetilde{K}$ given by \eqref{EQ:Kt}. The last part, follows since $\widetilde{K}$ agrees with $K$ in the points of continuity.
\end{proof}

\begin{rem}\label{REM:Lidski}
(a) Combining the statement of Part (iii) of Corollary \ref{ext32cor} with the celebrated Lidskii formula  \cite{li:formula} we can extend the trace formula \eqref{EQ:trace} in Part (iii) by
\begin{equation}\label{EQ:trace2}
\Tr(T)=\int_{\Omega}\widetilde{K}(x,x)d\mu(x)=\sum_j \lambda_j,
\end{equation}
where $\lambda_j$ are the eigenvalues of the operator $T$ counted with multiplicities.\\

%Furthermore, the statements of Corollary \ref{ext32cor}  and Remark \ref{REM:Lidski} can be extended further by using Corollary \ref{COR:extmain} instead of Theorem \ref{ext322}.

\noindent (b) The additional assumption on $\Omega$ to be a second countable topological space 
is only required in order to obtain the additional formula \eqref{EQ:tracec}. This requirement is enough general for the applications we will consider in this work.
 %Otherwise, without the additional topological structure in part (iii), but if the $\sigma$-algebra remains countably generated the formula \eqref{EQ:trace} is still valid.
\\

\noindent (c) If $\Omega$ is a second topological space and $K\in L^2(\mu\otimes\mu)$ we have $\tilde{K}(x,y)=K(x,y)$ for the points of continuity of $K$. Hence any continuous kernel on the diagonal provides an example where this limit can be obtained just as the pointwise value $K(x,x)$. An example of relevance in spectral geometry is provided in 
 Remark \ref{remsgeom4} (b) with the kernel of the double layer potential for a $C^2$  bounded region $\Omega$ in $\er^2$. Indeed, it is known from two dimensional potential theory that this kernel is continuous and $K(x,x)=-\half\kappa(x)$ where $\kappa(x)$ is the curvature of $\partial\Omega$ at $x$. 

 %In the more general case where the $\sigma$-algebra is countably generated 

\end{rem}

\section{Conditions in terms of spectral asymptotics}
\label{SEC:spas}

The typical application of the results above may come from the observation that knowing the spectral asymptotics of $E_1$ and $E_2$ implies conclusions about the membership in Schatten-von Neumann classes for their inverses. However, in the case when the spectral asymptotics of operators $E_1$ and $E_2$ are available, the spectral conclusions for the integral operators can be sharpened further in terms of the decay rates of their singular numbers.

As further examples, in Section \ref{SEC:appl} we will consider 
different kinds of domains and operators to test the membership in the Schatten-von Neumann classes.

The following conditions are based on the knowledge of the behaviour of the eigenvalue counting function of the operators $E_1, E_2$. We recall that for a self-adjoint operator $E$ with discrete spectrum $\{\lambda_j\}_j$ its eigenvalue counting function is defined by
$$
N(\lambda):=\#\{j: \lambda_j\leq \lambda\},
$$
where $\lambda_j$'s are counted with their respective multiplicities.
The conditions that we
 will impose can be effectively verified as we will shown in the subsequent subsections. 

\begin{thm} \label{extahk} 
Let $(\Omega_i,\mathcal{M}_i,\mu_i)$ $(i=1,2)$ be $\sigma$-finite measure spaces. For each $i=1,2$, let $E_i$ be an essentially self-adjoint operator on $L^2(\mu_i)$ such that the spectrum of its closure consists of a sequence of discrete and strictly positive eigenvalues $0<\lambda_{1,i}\leq\lambda_{2,i}\leq\cdots ,$ whose eigenvectors are a basis of $L^2(\mu_i)$. Assume that for the eigenvalue counting function $N_{i}(\lambda)$ of $E_i\,\,(i=1,2)$ there exist
  constants $C_i, p_i>0$ such that 
\beq\label{nilq} N_i(\lambda)\leq C_i(1+\lambda)^{p_i} \mbox{ for all }\lambda >0.\eq
Let $K\in L^2(\mu_2\otimes\mu_1)$ and let $T$ be the integral operator  from  
 $L^2(\mu_1)$ to $L^2(\mu_2)$ defined by
 \[(Tf)(x)=\int_{\Omega_1} K(x,y)f(y)d\mu_1(y).\]
Then the following holds:
\begin{enumerate}
\item[(i)] If $(E_2)_x(E_1)_yK\in L^2(\mu_2\otimes\mu_1)$, then $T$ belongs to the Schatten-von Neumann class $S_r(L^2(\mu_1),L^2(\mu_2))$ for all  $0<r<\infty$ such that 
\[\frac 1r<\half+\frac{1}{p_1}+\frac{1}{p_2},\]
and \eqref{par1} holds. \\

Moreover, the sequence of singular values $(s_k(T))_k$ satisfies the following
estimate for the rate of decay:
\[s_k(T)=o(k^{-\left(\half+\frac{1}{p_1}+\frac{1}{p_2}\right)}).\]
%\[s_k(T)=o(k^{-\frac{1}{\tau}}),\]
%where $\tau=(\half+\frac{1}{p_1}+\frac{1}{p_2})^{-1}.$

\item[(ii)] Let $E$ be an  unbounded invertible operator on  $L^2$ as above such that its spectrum satisfies  \eqref{nilq}  for  some $p>0$.  If either $E_yK\in L^2(\mu_2\otimes\mu_1)$ or  $E_xK\in L^2(\mu_2\otimes\mu_1)$, then $T$ belongs to the Schatten-von Neumann class $S_r(L^2(\mu_1),L^2(\mu_2))$ for  all  $0<r<\infty$ such that
\[\frac 1r<\half+\frac 1p,\]
and respectively \eqref{par2} or \eqref{par3} holds.\\

Moreover, the sequence of singular values $(s_k(T))_k$ satisfies the following
estimate for the rate of decay:
\[s_k(T)=o(k^{-\left(\half+\frac 1p\right)}).\]
%\[s_k(T)=o(k^{-\frac{1}{\tau}}),\]
%where $\tau=(\half+\frac 1p)^{-1}.$
\end{enumerate}
\end{thm}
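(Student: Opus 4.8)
The plan is to reduce part (i) to Theorem \ref{ext322}(i) for the Schatten membership, and to the explicit factorisation recorded in Remark \ref{EQ:fact} for the refined decay of the singular numbers; part (ii) is then entirely analogous, using Theorem \ref{ext322}(ii)--(iii) and the two-factor version of the same factorisation. Throughout, each $E_i$ is replaced by its self-adjoint closure, which by hypothesis is boundedly invertible with eigenvalues $\{\lambda_{j,i}\}_j$ and an orthonormal eigenbasis.

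First I would convert the counting-function hypothesis \eqref{nilq} into a decay estimate for the singular values of $E_i^{-1}$. Since $\lambda_{1,i}\le\cdots\le\lambda_{j,i}$ all lie below $\lambda_{j,i}$, we have $N_i(\lambda_{j,i})\ge j$, so \eqref{nilq} gives $j\le C_i(1+\lambda_{j,i})^{p_i}$, i.e. $\lambda_{j,i}\ge (j/C_i)^{1/p_i}-1\ge\half (j/C_i)^{1/p_i}$ for $j$ large. As $E_i^{-1}$ is self-adjoint with eigenvalues $\lambda_{j,i}^{-1}$ listed in decreasing order, its singular values obey $s_j(E_i^{-1})=\lambda_{j,i}^{-1}\le c_i\,j^{-1/p_i}$ for a constant $c_i$. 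Hence $\sum_j s_j(E_i^{-1})^{q_i}<\infty$ for every $q_i>p_i$, that is, $E_i^{-1}\in S_{q_i}(L^2(\mu_i))$ for all $q_i>p_i$ (while $E_i^{-1}\in S_{p_i}$ itself need not hold, which is exactly why the index in the conclusion is strict).

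For part (i), applying Theorem \ref{ext322}(i) with the exponents $q_1,q_2$ (any $q_i>p_i$) in place of $p_1,p_2$ gives $T\in S_r$ for $\tfrac1r=\half+\tfrac1{q_1}+\tfrac1{q_2}$; letting $q_i\downarrow p_i$ yields $T\in S_r$ for every $r$ with $\tfrac1r<\half+\tfrac1{p_1}+\tfrac1{p_2}$. For the decay rate I would instead exploit the factorisation $T=E_2^{-1}A(\overline{E_1^*})^{-1}$ from Remark \ref{EQ:fact}, where $A$ is the integral operator with kernel $(E_2)_x(E_1)_yK$, so $A\in S_2$ is Hilbert--Schmidt. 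The three ingredients are: (a) $s_n(A)=o(n^{-1/2})$, valid for any Hilbert--Schmidt operator since $n\,s_{2n}(A)^2\le\sum_{k=n+1}^{2n}s_k(A)^2\to0$; (b) $s_n\big((\overline{E_1^*})^{-1}\big)=s_n(E_1^{-1})\le c_1 n^{-1/p_1}$, using that passing from $E_1$ to $\overline{E_1^*}$ preserves singular values (as already observed in the proof of Theorem \ref{ext322}); and (c) the submultiplicativity $s_{m+n+\ell-2}(XYZ)\le s_m(X)s_n(Y)s_\ell(Z)$ for compact operators, obtained by iterating $s_{a+b-1}(UV)\le s_a(U)s_b(V)$. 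Taking $m=n=\ell$ and combining (a)--(c) gives $s_{3n-2}(T)\le s_n(E_2^{-1})\,s_n(A)\,s_n((\overline{E_1^*})^{-1})=o\big(n^{-(\half+\frac1{p_1}+\frac1{p_2})}\big)$, and since $s_k(T)$ is non-increasing and $k\asymp 3n$ this transfers to $s_k(T)=o\big(k^{-(\half+\frac1{p_1}+\frac1{p_2})}\big)$. Part (ii) is the same with $T=E^{-1}A$ (or $T=AE^{-1}$) and the two-factor inequality $s_{m+n-1}(XY)\le s_m(X)s_n(Y)$.

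The hard part will not be any single estimate but getting the little-$o$ rather than big-$O$ in the singular-value asymptotics: this rests entirely on the fact that the Hilbert--Schmidt factor $A$ decays strictly faster than $n^{-1/2}$, the other two factors contributing only the power rates $n^{-1/p_i}$. A secondary point requiring care is the index bookkeeping in the product inequalities, so that the comparison between the running index $k$ and the parameter $n$ (here $k\asymp 3n$ in part (i), $k\asymp 2n$ in part (ii)) is clean and the stated exponent comes out exactly.
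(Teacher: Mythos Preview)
Your proposal is correct and follows essentially the same approach as the paper: convert the counting-function bound into $s_j(E_i^{-1})\lesssim j^{-1/p_i}$, invoke Theorem~\ref{ext322} for the Schatten membership, and use the factorisation $T=E_2^{-1}A(\overline{E_1^*})^{-1}$ together with the iterated Fan inequality $s_{3n-2}(T)\le s_n(E_2^{-1})s_n(A)s_n(E_1^{-1})$ for the decay rate. The only cosmetic difference is in how the little-$o$ is extracted: you isolate $s_n(A)=o(n^{-1/2})$ for the Hilbert--Schmidt factor first and then multiply the three pointwise bounds, whereas the paper instead bounds $\sum_k k^{2(1/p_1+1/p_2)}s_{3k-2}(T)^2$ by $(C_1C_2)^2\sum_k s_k(A)^2<\infty$ and deduces the little-$o$ from the finiteness of this weighted $\ell^2$-sum together with the monotonicity of $s_k(T)$ --- both routes are standard and equivalent.
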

\begin{proof} (i) We note that the assumptions on $N_i$ for $i=1,2$, imply that 
\[k=N(\lambda_{k,i})\leq C_i\lambda_{k,i}^{p_i} .\]
Hence 
\beq\label{sejx} k^{\frac{1}{p_i}}\lambda_{k,i}^{-1}\leq C_i^\prime\eq
and thus also
\beq\label{sejex}\sum\limits_{k=1}^{\infty}\lambda_{k_i}^{-q_i}<\infty, \,\,\mbox{ for all }\,\,q_i>p_i.\eq
Thus $E_i^{-1}$ is a compact operator and its singular values are $s_k(E_i^{-1})=\lambda_{k,i}^{-1}$ and $E_i^{-1}\in S_{q_i}(L^2(\mu_i))$ for all $q_i>p_i$. Now, for  $q_i>p_i$  the fact that $T\in S_r(L^2(\mu_1),L^2(\mu_2))$ can now be deduced from Theorem \ref{ext322}   and    
 \[\half+\frac{1}{q_1}+\frac{1}{q_2}=\frac 1r<\half+\frac{1}{p_1}+\frac{1}{p_2}.\]

In order to get the estimate for the rate of decay of the singular values we will use the
 following  Fan's inequality (cf. \cite{fan:sch}, \cite{sim:trace}) for the singular values of the composition of two compact operators:
\beq\label{fan1} s_{k+l-1}(BC)\leq s_{k}(B)s_{l}(C),\eq
for all $k,l\geq 1$.

We will apply \eqref{fan1} to the factorisation $T=E_2^{-1}A(\overline{E_1^*})^{-1}$ obtained in the proof of Theorem \ref{ext322}.
By using \eqref{fan1} with $l+m-1$ instead of $l$ we get
\[s_{k+l+m-2}(T)\leq s_{k}(E_2^{-1})s_{l+m-1}(A(\overline{E_1^*})^{-1})\leq s_{k}(E_2^{-1})s_{l}(A)s_{m}(E_1^{-1}), \]
for $k,l,m\geq 1.$

Thus, with $k=l=m$ we obtain
\[s_{3k-2}(T)\leq s_{k}(E_2^{-1})s_{k}(A)s_{k}(E_1^{-1}).\]
Hence and by \eqref{sejx} we have
\begin{align*} \sum\limits_{k=1}^{\infty}k^{2(\frac 1{p_1}+\frac 1{p_2})}s_{3k-2}(T)^2\leq &\sum\limits_{k=1}^{\infty}k^{\frac 2{p_2}}s_{k}(E_2^{-1})^2s_{k}(A)^2k^{\frac 2{p_1}}s_{k}(E_1^{-1})^2 \\
 \leq &(C_1C_2)^2 \sum\limits_{k=1}^{\infty}s_{k}(A)^2<\infty .
 \end{align*}
Since $(s_{k}(T))_k$ is a non-increasing sequence, then  $s_{3k}(T), s_{3k-1}(T)\leq s_{3k-2}(T)$ and  
\[\sum\limits_{k=1}^{\infty}k^{2(\frac 1{p_1}+\frac 1{p_2})}s_{k}(T)^2<\infty .\]
Therefore
\[s_k(T)=o(k^{-\frac{1}{\tau}}),\]
where $\tau=(\half+\frac 1{p_1}+\frac 1{p_2})^{-1}.$ This concludes the proof of (i).

The proof of (ii) follows in a similar way by considering the factorisation $T=E^{-1}A$ so we can omit the details.
\end{proof}
%\begin{rem} The conclusions on the values of $r$ and $\tau$ in Theorem \ref{extahk} are sharp and can be tested from the subsequent corollaries.
%\end{rem} 

\begin{rem}\label{REM:Next}
If $\Omega_1=\Omega_2=\Omega$ and $\mu_1=\mu_2=\mu$, the statement of Theorem \ref{extahk} can be extended by  using Corollary \ref{COR:extmain} instead of Theorem \ref{ext322}. More precisely, assume that $E_1$ and $E_2$ satisfy the assumptions of 
Theorem \ref{extahk}. 
Let $1<q\leq 2$ and $\frac1q+\frac{1}{q'}=1$.
Then the following holds:
\begin{enumerate}
\item[(i)] If $(E_2)_x(E_1)_yK$ and $((E_2)_x(E_1)_yK)^*\in L^{q'}(\Omega, L^q(\Omega))$, then $T$ belongs to the Schatten-von Neumann class $S_r(L^2(\Omega))$ for all  $0<r<\infty$ such that 
\[\frac 1r<\frac{1}{q'}+\frac{1}{p_1}+\frac{1}{p_2},\]
and \eqref{pi1a} holds.\\

Moreover, the sequence of singular values $(s_k(T))_k$ satisfies the following
estimate for the rate of decay:
\[s_k(T)=o(k^{-\left(\frac{1}{q'}+\frac{1}{p_1}+\frac{1}{p_2}\right)}).\]
%\[s_k(T)=o(k^{-\frac{1}{\tau}}),\]
%where $\tau=(\half+\frac{1}{p_1}+\frac{1}{p_2})^{-1}.$

\item[(ii)] Let $E$ be an  unbounded  invertible operator on  $L^2$ as above such that its spectrum satisfies  \eqref{nilq}  for  some $p>0$.  
If $E_xK ,(E_xK)^*\in L^{q'}(\Omega, L^q(\Omega))$ or $E_yK ,(E_yK)^*\in L^{q'}(\Omega, L^q(\Omega))$, then $T$ belongs to the Schatten-von Neumann class $S_r(L^2(\Omega))$ 
 for  all  $0<r<\infty$ such that
\[\frac 1r<\frac{1}{q'}+\frac 1p,\]
and respectively \eqref{pi1b} or \eqref{pi1c} holds.\\

Moreover, the sequence of singular values $(s_k(T))_k$ satisfies the following
estimate for the rate of decay:
\[s_k(T)=o(k^{-\left(\frac{1}{q'}+\frac 1p\right)}).\]
%\[s_k(T)=o(k^{-\frac{1}{\tau}}),\]
%where $\tau=(\half+\frac 1p)^{-1}.$
\end{enumerate}
\end{rem} 

\section{Applications}
\label{SEC:appl}

In this section we will describe several example situations where one can apply the obtained results:

\begin{itemize}
\item compact manifolds: taking $E_1, E_2$ to be elliptic pseudo-differential operators one obtains conditions in terms of the regularity of the kernel;
\item lattices: here the regularity of the kernel becomes irrelevant; however, due to non-compactness the conditions are formulated in terms of the behaviour of the integral kernel at infinity;
\item $\Rn$: for domains which are not necessarily bounded but have finite Lebesgue measure in Section \ref{SEC:Riesz} we obtain conditions still {\em only} in terms of the regularity of the kernel;
\item $\Rn$: in general, due to non-boundedness the regularity of the kernel by itself is not sufficient to ensure the compactness of the operator, and the regularity assumptions should be combined with decay conditions at infinity. It is convenient to formulate such conditions in terms of the action of harmonic or anharmonic oscillators on the kernel; in particular, it shows that {\em different combinations of regularity and decay} may ensure the membership in the Schatten-von Neumann classes on $\Rn$;
\item sub-Riemannian settings: here is may be natural to formulate the conditions in terms of the operators associated to the sub-Riemannian structure (such as the sub-Laplacian). In Section \ref{SEC:subR} we briefly discuss the implications for general compact sub-Riemannian manifolds, contact manifolds, strictly pseudo-convex CR manifolds, and (sub-)Laplacians on compact Lie groups.
\end{itemize} 

Thus, in the following subsections we consider several applications of Theorem \ref{ext322}, Corollary \ref{ext32cor} and Theorem \ref{extahk}.

In the case when the operators act on the same space we also have natural extensions of the statements below by using mixed $L^{q'}(\Omega, L^q(\Omega))$ norms as in 
Corollary \ref{COR:extmain} and Remark \ref{REM:Next} instead. For simplicity, we mostly restrict to the $L^2$-case since the extensions to the $L^{q'}(\Omega, L^q(\Omega))$ setting are rather straightforward.

\subsection{Operators on closed manifolds}
\label{SEC:appcm}

In this section we will consider the case of integral operators
on a compact manifold without boundary.

Thus, let $M$ be a smooth compact manifold without boundary of dimension $n$ endowed with a volume element $dx$.
 We denote by $\Psi^{\nu}_{+e}(M)$ the class of positive elliptic pseudo-differential 
 operators of order $\nu\in\er$, 
 i.e. positive operators which in every coordinate chart are operators in H\"ormander classes 
 on $\Rn$ with elliptic symbols
 in $S^\nu_{1,0}$, see e.g. \cite{shubin:r}.
 
We note that for any positive elliptic operator $P\in \Psi^\nu_{+e}(M)$ the standard Sobolev space $H^\mu(M)$ defined in local coordinates can be characterised
as the space of all distributions $f\in\mathcal D'(M)$ such that
$(I+P)^{\frac\mu\nu} f\in L^2(M)$.

Let now $M_1, M_2$ be closed manifolds and $P_i\in\Psi^{\nu_i}_{+e}(M_i)$
($i=1,2$) with $\nu_i>0$. Consequently, the following mixed regularity Sobolev space 
$H^{\mu_2,\mu_1}_{x,y}(M_2\times M_1)$
of mixed regularity $\mu_1,\mu_2\geq 0$, defined by
\begin{equation}\label{EQ:mixed-Sobolev}
K\in H^{\mu_2,\mu_1}_{x,y}(M_2\times M_1) \Longleftrightarrow
(I+P_2)_x^{\frac{\mu_2}{\nu_2}}(I+P_1)_y^{\frac{\mu_1}{\nu_1}}  K \in L^2(M_2\times M_1),
\end{equation}
is independent of the choice of operators $P_1, P_2$.

The relation between these mixed Sobolev spaces and the standard Sobolev spaces $H^\mu(M_2\times M_1)$ on the manifold $M_2\times M_1$ is given by
\begin{equation}\label{EQ:Sobs}
H^{\mu_1+\mu_2}(M_2\times M_1)\subset H^{\mu_2,\mu_1}_{x,y}(M_2\times M_1)\subset
H^{\min(\mu_1,\mu_2)}(M_2\times M_1),
\end{equation} 
for all $\mu_1,\mu_2\geq 0$. This can be readily seen by an extension of an argument in \cite[Proposition 4.3]{dr:suffkernel} where this was shown to hold in the case of $M_1=M_2$.

%In the case $M=M_1=M_2$, the relation between these mixed Sobolev spaces and the standard Sobolev spaces $H^\mu(M\times M)$ on the manifold $M\times M$ %is given by
%\begin{equation}\label{EQ:Sobs}
%H^{\mu_1+\mu_2}(M\times M)\subset H^{\mu_1,\mu_2}_{x,y}(M\times M)\subset
%H^{\min(\mu_1,\mu_2)}(M\times M),
%\end{equation} 
%for all $\mu_1,\mu_2\geq 0$, see \cite[Proposition 4.3]{dr:suffkernel}.

Then we have the following statement. We will write $E_i=(I+P_i)^{\frac{\mu_i}{\nu_i}} $ for $i=1,2$.
\begin{cor} \label{ext322l} 
Let $M_1, M_2$ be closed manifolds of dimensions $n_1, n_2$, respectively, and let $\mu_1, \mu_2 \geq 0$. 
Let 
$K\in L^2(M_2\times M_1)$ be such that $K\in H^{\mu_2,\mu_1}_{x,y}(M_2\times M_1)$. Then
the integral operator $T$ from $L^2(M_1)$ to $L^2(M_2)$ defined by
 \[(Tf)(x)=\int_{M_1} K(x,y)f(y)dy,\]
is in the Schatten-von Neumann  classes $S_r(L^2(M_1), L^2(M_2))$ for 
\begin{equation}\label{EQ:mfdr}
\frac 1r<\half +\frac{\mu_1}{n_1}+\frac{\mu_2}{n_2}.
\end{equation} 
%and 
%\[\|T\|_{S_r}\leq 2^{1+\frac{2\mu_1}{\nu_1}+ \frac{\mu_2}{\nu_2}}\|E_1\|_{S_{\frac{n_1}{\mu_1}}}\|E_2\|_{S_{\frac{n_2}{\mu_2}}}\|K\|_{H^{\mu_2,\mu_1}_{x,y}(M_2\times M_1)}.\]
Moreover,  its singular numbers satisfy
\begin{equation}\label{EQ:ssmfd}
s_j(T)=o(j^{-\left(\frac12+\frac{\mu_1}{n_1}+\frac{\mu_2}{n_2}\right)}).
\end{equation} 
In particular, for $M=M_1=M_2$, $n=n_1=n_2$:
\begin{itemize}
\item[(i)] If $K\in L^2(M\times M)$ is such that $K\in H^{\mu}(M\times M)$ for 
 $\mu>\frac{n}{2}$, then $T$ is trace class on $L^{2}(M)$ and its trace is given by \eqref{EQ:trace}.
\item[(ii)] If $K\in C_x^{\ell_1} C_{y}^{\ell_2}(M\times M)$ for some even integers $\ell_1,\ell_2\in 2\mathbb N_0$
 such that $\ell_1+\ell_2>\frac n2$, then
 $T$ is trace class on $L^{2}(M)$ and its trace is given by
\begin{equation}\label{EQ:trace2rr}
\Tr(T)=\int_M K(x,x)dx.
\end{equation}
\end{itemize} 
\end{cor}
\begin{proof} In order to prove that $T$ belongs to $S_r(L^2(M_1), L^2(M_2))$ with $r$ satisfying \eqref{EQ:mfdr} we first  recall the following fact: if 
$P\in \Psi^{\nu}_{+e}(M)$ is a positive elliptic pseudo-differential operator of order $\nu>0$ on a closed manifold $M$ of dimension $n$ and $0<p<\infty$ then 
\begin{equation}\label{EQ:mansch}
(I+P)^{-\alpha}\in S_p(L^2(M)) \quad\textrm{ if and only if }\quad \alpha>\frac{n}{p\nu},
\end{equation} 
see \cite[Proposition 3.3]{dr:suffkernel}.
Consequently, condition \eqref{EQ:mfdr} follows from Theorem \ref{ext322} with $E_j=(I+P_j)^{\frac{\mu_j}{\nu_j}}$ for any $P_j\in \Psi^{\nu_j}_{+e}(M_j), \,\,(j=1,2)$. Indeed, since $(I+P_1)_y^{\frac{\mu_1}{\nu_1}}\in S_{p_1}$ for $p_1>\frac{n_1}{\mu_1}$ and  $(I+P_2)_x^{\frac{\mu_2}{\nu_2}}\in S_{p_2}$ for $p_2>\frac{n_2}{\mu_2}$, we have that  $T$ belongs to $S_r(L^2(M_1), L^2(M_2))$ for $r>0$
as in \eqref{EQ:mfdr}.  The rate of decay \eqref{EQ:ssmfd} is now a consequence of Theorem \ref{extahk} and the spectral asymptotics for elliptic pseudo-differential operators on compact manifolds. Furthermore, Part (i) is obtained by letting $r=1$ in  \eqref{EQ:mfdr}, and Part  (ii) follows from Part (i) and formula \eqref{EQ:tracec}.
\end{proof}

This corollary refines the results by the authors in \cite{dr:suffkernel} where the statement \eqref{EQ:mfdr} was obtained in the case $M_1=M_2$. Now this and the refinement of the decay rate in \eqref{EQ:ssmfd} have been  obtained as corollaries of Theorem \ref{ext322} and Theorem \ref{extahk}.

\begin{rem}\label{remsgeom4} (a) We can note that the index $\frac n2$ in Part (ii) of Corollary \ref{ext322l} is in general sharp. For example, for $M=\tn$ being the torus of even dimension $n$, there exist a function $\chi$ of class $C^{\frac n2}$ such that the series of its Fourier coefficients diverges (see \cite[Ch. VII]{ste-we:fa} or \cite{wai:trig}). By considering the convolution kernel $K(x,y)=\chi(x-y)$, the singular values of the operator $T$ given by $Tf=f*\chi$ agree with the absolute values of the Fourier coefficients of $\chi$. Hence, $T\notin S_1(L^2(\tn))$ but $K\in C^{\frac n2}(M\times M)$.
Thus, we see that Part (ii) of Corollary \ref{ext322l} with $\ell_1=0$ and $\ell_2=\frac n2$ is sharp.

This, in turn, justifies the sharpness, in general, for all the results in this paper. \\

(b) An example that arises in spectral geometry is given by the two dimensional double layer potential. Let $\Omega$ be a $C^k$  bounded region in $\er^2$ with $k\geq 2$. Let $E(x,y)=\frac{1}{\pi}\log\frac{1}{|x-y|}$, the  {\em double layer potential} $K:L^2(\partial\Omega)\rightarrow L^2(\partial\Omega)$ is defined as the operator 
\[Kf(x)=\int_{\partial\Omega}\partial\nu_yE(x,y)f(y)dS(y),\]
where $\partial\nu_y$ denotes is the outer normal derivative. The kernel $K(x,y)$ is continuous on $\partial\Omega\times\partial\Omega$, by studying its regularity depending on $k$ as has been applied in \cite{sm:dlay} and using the results for closed manifolds, in this case for $\partial\Omega$ one can determine the rate of decay for the eigenvalues  of the double layer potential from the corresponding membership of the double layer potential to a Schatten-von Neumann class. In particular, one can also deduce trace class properties. We refer to \cite{sm:dlay} for the details on this important example.
\end{rem}

\subsection{Operators on domains with finite measure}
\label{SEC:Riesz}

In Section \ref{SEC:appcm} we considered the case of compact domains. We now discuss the situation when the domains may be unbounded but still have finite measure.

Let $\Omega\subset\Rn$ be a measurable set with finite non-zero Lebesgue measure.
% with piecewise $C^1$ boundary. 
Let us define
$$
\varepsilon_{\alpha,n}(z):=c_{\alpha,n} |z|^{\alpha-n},
$$
with $c_{\alpha,n}=2^{\alpha-n}\pi^{-n/2}\frac{\Gamma(\alpha/2)}{\Gamma((n-\alpha)/2)}.$
Then for $0<\alpha<n$ and $x\in\Omega$ the Riesz potential operator is defined by
\begin{equation}\label{EQ:Riesz}
(\mathcal{R}_{\alpha,\Omega}f)(x):=\int_\Omega \varepsilon_{\alpha,n}(x-y) f(y) dy.
\end{equation}  
Such operators arise naturally as Green functions for boundary value problems for fractional Laplacians on $\Rn$ in view of the relations
\begin{equation}\label{EQ:Rfs}
(-\Delta_y)^{\alpha/2}\varepsilon_{\alpha,n}(x-y) =\delta_x.
\end{equation}  
It was shown in \cite[Proposition 2.1]{RRS-JFA} that the operator $\mathcal{R}_{\alpha,\Omega}$ is non-negative, that is, all of its eigenvalues are non-negative, and satisfies the estimate
\begin{equation}\label{EQ:RRSev}
 \lambda_k(\mathcal{R}_{\alpha,\Omega})=s_k(\mathcal{R}_{\alpha,\Omega})\leq 
 C|\Omega|^{\frac{\alpha}{n}} k^{-\frac{\alpha}{n}}.
\end{equation} 
Indeed, once one shows that the operator $\mathcal{R}_{\alpha,\Omega}$ is non-negative, the estimate \eqref{EQ:RRSev} follows by applying an estimate of 
Cwikel \cite{Cwikel} to the `square root' of the operator $\mathcal{R}_{\alpha,\Omega}$.
The constant $C=C(\alpha,n)$ in \eqref{EQ:RRSev} depends only on $\alpha$ and $n$, and its value can be calculated explicitly, see \cite[Remark 2.2]{RRS-JFA}.
If $\Omega$ is bounded such results go back to Birman and Solomyak \cite{BS-1970}.

As a consequence of \eqref{EQ:RRSev} one readily sees that
operators $\mathcal{R}_{\alpha,\Omega}$ are compact and satisfy
\begin{equation}\label{EQ:Rsch}
\mathcal{R}_{\alpha,\Omega}\in S_p(L^2(\Omega)) \quad\textrm{ for }\; p>\frac{n}{\alpha}.
\end{equation}
Isoperimetric inequalities for operators $\mathcal{R}_{\alpha,\Omega}$ from the point of view of the dependence on $\Omega$ were investigated in \cite{RRS-JFA}.

In view of the relation \eqref{EQ:Rfs} we can write $(-\Delta_\Omega)^{\alpha/2}:=\mathcal{R}_{\alpha,\Omega}^{-1}.$

Applying Theorem \ref{ext322} with Riesz potential operators, we obtain the analogue of Corollary \ref{ext322l} in domains in $\Rn$ with boundaries. 

\begin{cor}\label{COR:Riesz}
Let $\Omega_i\subset{\mathbb R}^{n_i}$, $i=(1,2)$, be measurable sets with finite non-zero Lebesgue measure and let $0<\alpha_i<n_i$. Let 
$K\in L^2(\Omega_2\times \Omega_1)$ be such that we have
$(-\Delta_{\Omega_2})^{\alpha_2/2} (-\Delta_{\Omega_1})^{\alpha_1/2}K\in L^2(\Omega_2\times \Omega_1)$. Then
the integral operator $T$ from $L^2(\Omega_1)$ to $L^2(\Omega_2)$ defined by
 \[(Tf)(x)=\int_{\Omega_1} K(x,y)f(y)dy,\]
is in the Schatten classes $S_r(L^2(\Omega_1), L^2(\Omega_2))$ for 
\begin{equation}\label{EQ:mfdrxx}
\frac 1r<\half +\frac{\alpha_1}{n_1}+\frac{\alpha_2}{n_2}.
\end{equation} 
Moreover,
\begin{multline}\label{par1z}\|T\|_{S_r}\leq \|(-\Delta_{\Omega_1})^{-\frac{\alpha_1}{2}}\|_{S_{\frac{n_1}{\alpha_1}}}\|(-\Delta_{\Omega_2})^{-\frac{\alpha_2}{2}}\|_{S_{\frac{n_2}{\alpha_2}}}
\times \\ \times \|(-\Delta_{\Omega_2})^{\alpha_2/2} (-\Delta_{\Omega_1})^{\alpha_1/2}K\|_{L^2(\Omega_2\times\Omega_1)}.
\end{multline} 
%and its singular numbers satisfy
%\begin{equation}\label{EQ:ssmfd}
%s_j(T)=o(j^{-\left(\frac12+\frac{\alpha_1}{n_1}+\frac{\alpha_2}{n_2}\right)}).
%\end{equation} 

In particular, for $\Omega=\Omega_1=\Omega_2$, $n=n_1=n_2$:
\begin{itemize}
\item[(i)] If $K\in L^2(\Omega\times \Omega)$ is such that $K\in H^{\alpha}(\Omega\times \Omega)$ for 
 $\alpha>\frac{n}{2}$, then $T$ is trace class on $L^{2}(\Omega)$ and its trace is given by \eqref{EQ:trace}.
\item[(ii)] If $K\in C_x^{\ell_1} C_{y}^{\ell_2}(\Omega\times \Omega)$ for some even integers $\ell_1,\ell_2\in 2\mathbb N_0$
 such that $\ell_1+\ell_2>\frac n2$, then
 $T$ is trace class on $L^{2}(\Omega)$ and its trace is given by
\begin{equation}\label{EQ:trace2ty}
\Tr(T)=\int_\Omega K(x,x)dx.
\end{equation}
\end{itemize} 
\end{cor} 

We note that Corollary \ref{COR:Riesz} applies to domains that do not have to be bounded but have finite measure. If the measure of the domain is infinite the regularity of the kernel may not be enough and should be complemented by decay conditions at infinity. Such situations will be considered in Section \ref{anharmonic1} and Section \ref{anharmonic2}.

Applications of Theorem \ref{extahk} to obtain further refinements on the decay rate of singular numbers of $T$ are possible, however, the spectral asymptotics required for its use for fractional Laplacians in $\Omega$ could in general depend on properties of the boundary and boundary conditions, and thus would require further assumptions.

\subsection{Operators on lattices}

In this section we consider operators acting on functions on the integer lattice $\Zn$.
Compared to Section \ref{SEC:appcm}, here the decay conditions at infinity are important while the regularity of the kernel becomes irrelevant (as regularity of a pointwise defined function on a discrete lattice).

We note that as before, Part (i) of the following statement is a special case of Part (ii) with $q=2$, but in the case of $n=m$, i.e. when the integral operator is acting on the same space. It will be useful to employ the operator 
\begin{equation}\label{EQ:Eks}
E^\alpha f(k):=(1+|k|)^\alpha f(k),\quad k\in\Zn.
\end{equation} 
In cases when  there are several variables, we will also write $E_k^\alpha$ for $E^\alpha$ to emphasise that the operator is acting in the variable $k$.

\begin{cor}\label{COR:Zn}
Let $n,m\in\mathbb{N}$. Let $K:\Zn\times\Zm\to\C$ be a function and
 let $T$ be the operator, bounded from $\ell^2(\Zm)$ to $\ell^2(\Zn)$, defined by
 \[(Tf)(k)=\sum_{l\in\Zm} K(k,l)f(l).\]
 Then we have the following properties.
\begin{itemize}
\item[(i)] Assume that for some $\alpha,\beta\geq 0$  we have 
\begin{equation}\label{EQ:ZnKcond}
\|K\|_{\alpha,\beta}^2:=\sum_{k\in\Zn}\sum_{l\in\Zm} (1+|k|)^{2\alpha} (1+|l|)^{2\beta} |K(k,l)|^2<\infty.
\end{equation} 
%$$(1+|k|)^s (1+|l|)^t K(k,l)\; \textrm{ is in }\; \ell^2(\Zn\times\Zm).$$
Then $T\in S_r(\ell^2(\Zm),\ell^2(\Zn))$ for all $0<r<\infty$ such that
\begin{equation}\label{EQ:Znr}
\frac{1}{r}<\frac12+\frac{\alpha}{n}+\frac{\beta}{m}.
\end{equation} 
Moreover,
\beq\label{par1w}\|T\|_{S_r}\leq \|E^{-\alpha_1}\|_{S_{\frac{n_1}{\alpha_1}}}\|E^{-\alpha_2}\|_{S_{\frac{n_2}{\alpha_2}}}\|K\|_{\alpha,\beta}.\eq

The sequence of singular values $(s_j(T))_j$ satisfies the following
estimate for the rate of decay:
\begin{equation}\label{EQ:Zns}
s_j(T)=o(j^{-\left(\half+\frac{\alpha}{n}+\frac{\beta}{m}\right)}).
\end{equation} 
In particular, for $n=m$, if $\alpha,\beta\geq 0$ are such that 
$$\alpha+\beta>\frac{n}{2},$$
then the operator $T$ is trace class on $\ell^2(\Zn)$ and its trace is given by
\begin{equation}\label{EQ:Zntr}
\Tr(T)=\sum_{k\in\Zn} K(k,k)  =\sum_j \lambda_j,
\end{equation}
where $\lambda_j$ are the eigenvalues of the operator $T$ counted with multiplicities.
\item[(ii)] In the case $m=n$, let $1<q\leq 2$ and $\frac1q+\frac{1}{q'}=1$.
Assume that for some $\alpha,\beta\geq 0$  we have 
\begin{equation}\label{EQ:ZnKcond2}
\sum_{l\in\Zn} (1+|l|)^{\beta q'} \left(\sum_{k\in\Zn}(1+|k|)^{\alpha q}  |K(k,l)|^q\right)^{\frac{q'}{q}}<\infty
\end{equation} 
and 
\begin{equation}\label{EQ:ZnKcond3}
\sum_{k\in\Zn} (1+|k|)^{\alpha q'} \left(\sum_{l\in\Zn}(1+|l|)^{\beta q}  |K(k,l)|^q\right)^{\frac{q'}{q}}<\infty.
\end{equation} 
Then $T\in S_r(\ell^2(\Zn))$ for all $0<r<\infty$ such that
\begin{equation}\label{EQ:Znryy}
\frac{1}{r}<\frac1{q'}+\frac{\alpha+\beta}{n}.
\end{equation} 
Moreover, the sequence of singular values $(s_j(T))_j$ satisfies the following
estimate for the rate of decay:
\begin{equation}\label{EQ:Znsve3}
s_j(T)=o(j^{-\left(\frac{1}{q'}+\frac{\alpha+\beta}{n}\right)}).
\end{equation} 
In particular, if $\alpha,\beta\geq 0$ are such that 
$\alpha+\beta>\frac{n}{q},$ then $T$ is trace class on   $\ell^2(\Zn)$ and its trace is given by \eqref{EQ:Zntr}.
\end{itemize} 
\end{cor} 

\begin{proof}
Part (i). We observe that the assumption \eqref{EQ:ZnKcond} 
of Corollary \ref{COR:Zn} can be formulated as
\begin{equation}\label{EQ:kpf1}
E_k^\alpha E_l^\beta K\in \ell^2(\Zn\times\Zm).
\end{equation} 
In order to apply Theorem \ref{extahk} we first note that the Kronecker's delta $\delta_k$ is an eigenfunction of $E^\alpha$ with the eigenvalue $(1+|k|)^\alpha$. Consequently, for $\alpha>0$ we have
$$
N_{E^\alpha}(\lambda)=\#\{k:\, (1+|k|)^\alpha\leq \lambda\}\lesssim 
\#\{k:\, |k|\leq \lambda^{1/\alpha}\}=\lambda^{n/\alpha},
$$
for the operator $E^\alpha$ acting on $\Zn$.
Consequently, by Theorem \ref{extahk}, Part (i) and condition \eqref{EQ:kpf1} we get that
$T\in S_r(\ell^2(\Zm),\ell^2(\Zn))$ provided that 
$$
\frac1r<\frac12+\frac{1}{m/\beta}+\frac{1}{n/\alpha},
$$
implying \eqref{EQ:Znr} for $\beta,\alpha>0$. Otherwise, \eqref{EQ:Znr} follows by Part (ii) of Theorem \ref{extahk}. The decay rate \eqref{EQ:Zns} is another consequence of Theorem \ref{extahk}.

Finally, the trace class condition follows from this by taking $r=1$, in view of Corollary \ref{ext32cor}  and Remark \ref{REM:Lidski}.

Part (ii) follows by the same argument but employing Corollary \ref{COR:extmain} and Remark \ref{REM:Next} instead.
\end{proof}

We note that integral operators on lattices can be also considered as pseudo-difference operators, which is an analogue of pseudo-differential operators on the lattice $\Zn$. Conditions for the membership of such operators in Schatten-von Neumann classes in terms of their symbols were given in \cite{Ruzh-Zn}.

\subsection{Conditions in terms of anharmonic oscillators}
\label{anharmonic1}
In Corollary \ref{COR:Riesz} we considered the case of domains of $\Rn$ of finite measure. We now discuss the case the whole space $\Rn$ when the regularity of the kernel should be complemented by decay conditions at infinity.

For this, we consider a test with the anharmonic oscillator on $L^2(\Rn)$, i.e., the  operator
\[E_a=-\Delta+|x|^a\]
 on $L^2(\Rn)$ for $a>0$. For $a=2$, the operator $E_2=-\Delta+|x|^2$ is the usual harmonic oscillator. 

The study of the harmonic oscillator has been a very active field of research. For the spectral theory of non-commutative versions of the harmonic oscillator we refer to the interesting work of Parmeggiani et al \cite{pab2:b}, \cite{par:1a}, \cite{par:1b}, \cite{par:1c}, \cite{par:1d}, \cite{par:1e} and \cite{par:1f}.\\

Since for operators on $\Rn$ both the regularity and decay of the kernel at infinity are relevant it is natural to try to measure these properties of the kernel by the action of harmonic or anharmonic oscillators. 
The tests with anharmonic oscillators appear to be more natural  when compared to the harmonic oscillator, since the orders of  regularity and decay do not have to be the same.
Moreover, it is natural to consider their fractional powers since the regularity or decay orders do not have to be integers.

Thus, as a consequence of the results of this paper we get the following conditions. 
 
\begin{cor} \label{extah} 
Let $E_a=-\Delta+|x|^a$ on $\Rn$, $E_b=-\Delta+|x|^b$ on $\Rm$ with $a,b>0$.  Let 
$K\in L^2(\Rm\times\Rn)$ and let $T$ be the integral operator  from  
 $L^2(\Rn)$ to $L^2(\Rm)$ defined by
 \[(Tf)(x)=\int_{\Rn} K(x,y)f(y)dy.\]

\begin{enumerate}
\item[(i)] Let $\alpha, \beta\geq 0$. If $(E_b)_x^{\beta}(E_a)_y^{\alpha}K\in L^2(\Rm\times\Rn)$, then $T$ belongs to the Schatten-von Neumann class $S_r(L^2(\Rn),L^2(\Rm))$ for all $0<r<\infty$ such that
\[\frac 1r<\half+\frac {\alpha}{p_a}+\frac {\beta}{p_b},\]
where $p_a=n(\frac{1}{a}+\half)$ and $p_b=m(\frac{1}{b}+\half)$.
Moreover,
\beq\label{par1ww}\|T\|_{S_r}\leq \|E_a^{-1}\|_{S_{\frac{p_a}{\alpha}}}\|E_b^{-1}\|_{S_{\frac{p_b}{\beta}}}\|(E_b)_x^{\beta}(E_a)_y^{\alpha}K\|_{L^2(\Rm\times\Rn)}.\eq

The sequence of singular values $(s_k(T))_k$ satisfies the following
estimate for the rate of decay:
\[s_k(T)=o(k^{-(\half+\frac {\alpha}{p_a}+\frac {\beta}{p_b})}).\]

\item[(ii)] Let $\alpha, \beta\geq 0$. If $m=n$,  $(E_b)_x^{\beta}(E_a)_y^{\alpha}K\in L^2(\Rn\times\Rn)$, and $\half<\frac{\alpha}{p_a}+\frac{\beta}{p_b}$, then
$T$ belongs to the trace class $S_1(L^2(\Rn))$ and its trace is given by
\begin{equation}\label{EQ:trace5}
\Tr(T)=\int_{\Rn}\widetilde{K}(x,x)dx.
\end{equation}
\end{enumerate}
\end{cor}

\begin{proof} The distribution of eigenvalues of $E_a$ and other second order differential operators has been investigated by E. C. Titchmarsh in \cite{titch2:ei}. In particular,
Titchmarsh considered operators of the form $-\Delta+V(x)$ with 
$V(x)\rightarrow\infty$ as $|x|\rightarrow\infty$ and $V(x)$ ultimately non-decreasing on every straight line radiating from the origin. If $N(\lambda)$ denotes the number of eigenvalues less than $\lambda$, then he showed in 
\cite[Section 17.8]{titch2:ei}
that
\[N(\lambda)\sim\frac{1}{2^n\pi^{\frac{n}{2}}\Gamma(\frac{n}{2}+1)}\int_{V<\lambda}\{\lambda-V(x)\}^{\frac{n}{2}}dx, \mbox{ as }\lambda\rightarrow\infty .\]
In particular if $V(x)=|x|^a$ we have 
\[\int_{|x|^a<\lambda}\{\lambda-|x|^a\}^{\frac{n}{2}}dx=C\int\limits_0^{\lambda^{\frac 1a}}\left(\lambda-r^a\right)^{\frac{n}{2}}r^{n-1}dr\leq C\int\limits_0^{\lambda^{\frac 1a}}r^{\frac{na}{2}+n-1}dr.\]
Since 
\[\int\limits_0^{\lambda^{\frac 1a}}r^{\frac{na}{2}+n-1}dr=\frac{\lambda^{n(\frac{1}{a}+\half)}}{n(\frac{1}{a}+\half)}.\]

We obtain 
\[N(\lambda)\sim C\lambda^{p_a} \mbox{ as }\lambda\rightarrow\infty ,\]
where $p_a=n(\frac{1}{a}+\half)$.\\

Now, since $\lambda$ is an eigenvalue of $E_a$ if and only if $\lambda^{\alpha}$ is an eigenvalue of $(E_a)^{\alpha}$, we obtain
\beq N_{(E_a)^{\alpha}}(\lambda)=N_{E_a}(\lambda^{\frac 1{\alpha}})\leq C\lambda^{\frac{p_a}{\alpha}}\label{njgtk},\eq
where $N_{P}$ denotes the counting eigenvalue function for the operator $P$.
Then we have
\[
\sum\limits_{k=1}^{\infty}\lambda_k^{-\alpha q}<\infty, \,\,\mbox{ for all }\,\,q>\frac{p_a}{\alpha}.
\]
The singular values of $(E_a^{\alpha})^{-1}$ are $s_k((E_a^{\alpha})^{-1})=\lambda_k^{-\alpha}$ and $(E_a^{\alpha})^{-1}\in S_q(L^2(\Rn))\,\mbox{ for all }\,\,q>\frac{p_a}{\alpha}$. In a similar way we also have  $(E_b^{\beta})^{-1}\in S_{q'}(L^2(\Rm))\,\mbox{ for all }\,\,q'>\frac{p_b}{\beta}$. 

As a consequence of Theorem \ref{ext322}  we obtain
\[T\in S_r(L^2(\Rn),L^2(\Rm)) \]
for 
\[\frac 1r<\half+\frac {\alpha}{p_a}+\frac {\beta}{p_b}.\]
The rate of decay and Part (ii) now  follow from Theorem \ref{extahk}. 
\end{proof}

\begin{ex}
Let us give a simple example for Corollary \ref{extah}: let $T:L^2(\Rn)\to L^2(\Rn)$ be an integral operator with kernel $K(x,y)$. Assume that $1\leq n\leq 3$ and that
$\Delta_x K, (1+|x|^b) K \in L^2(\Rn\times\Rn)$. Then $T$ is a trace class operator provided that $b>\frac{2n}{4-n}.$
\end{ex}
This statement follows immediately from Part (ii) of Corollary \ref{extah} by taking 
$\alpha=0$, $\beta=1$, implying that $T$ is trace class provided that $p_b=n(\frac{1}{b}+\half)<2$.

\subsection{Higher order anharmonic oscillators}
\label{anharmonic2}

One can also get a number of similar tests based on the estimation of $N(\lambda)$ for different operators and the arguments in Corollary \ref{extah}. Here we will
consider different examples of anharmonic oscillators however restricting to integer orders of derivatives and weights.

More specifically, let us consider the operator $$E= (-\Delta)^k+|x|^{2\ell}$$ on $\Rn$, $n\geq 1$, where $k,\ell$ are integers $\geq 1$.

It is well known that such $E$ has a discrete spectrum (see \cite{{shubin:r}}) and it was also shown in \cite[Theorem 3.2]{BBR-book} that for large $\lambda$ the eigenvalue counting function $N(\lambda)$ is bounded by $C\int_{a(x,\xi)<\lambda} dx d\xi$, where $a(x,\xi)$ is the Weyl symbol of the partial differential operator $E$.
 By the change of variables $\xi=\lambda^{1/2k}\xi'$ and $x=\lambda^{1/2\ell}x'$, we can estimate
for large $\lambda$ that
 \begin{equation}\label{EQ:anhnl}
N(\lambda)\lesssim \iint_{|\xi|^{2k}+|x|^{2\ell}<\lambda} dx d\xi= \lambda^{n(\frac{1}{2k}+\frac{1}{2\ell})}
  \iint_{|\xi'|^{2k}+|x'|^{2\ell}<1} dx' d\xi' \lesssim \lambda^{n(\frac{1}{2k}+\frac{1}{2\ell})}.
\end{equation} 
 We note that refined estimates for the remainder in the spectral asymptotics for $N(\lambda)$ 
 were also studied by Helffer and Robert in   \cite[Theorem 6 and Corollary 2.7]{hr:anosc2} in the case $k=\ell$, and 
 in \cite{hr:anosc4} for different $k$ and $\ell$  in the case $n=1$. 
 
 Moreover, all the results remain unchanged if we add lower order terms to the operator $E$.
 
 Consequently, from Theorem \ref{extahk} and arguing similarly as in the  proof of Corollary \ref{extah} we obtain:

\begin{cor} \label{extahm1} 
Let $E_i= (-\Delta)^{k_i}+|x|^{2\ell_i}$ be operators on $\er^{n_i}$, where
$n_i, k_i, \ell_i$ are integers $\geq 1$ for $i=1,2.$
Let us set $p_i:=\frac{n}{2}(\frac 1{k_i}+\frac 1{\ell_i}),\,\, i=1,2.$ 

Let  $K\in L^2(\er^{n_2}\times\er^{n_1})$ and let $T$ be the integral operator  from  
 $L^2(\er^{n_1})$ to $L^2(\er^{n_2})$ defined by
 \[(Tf)(x)=\int_{\er^{n_1}} K(x,y)f(y)dy.\]
Let $\alpha,\beta\geq 0$ and suppose that $(E_2)_x^{\beta}(E_1)_y^{\alpha}K\in L^2(\er^{n_2}\times\er^{n_1})$.   
Then $T$ belongs to the Schatten-von Neumann class $S_r(L^2(\er^{n_1}),L^2(\er^{n_2}))$ for
all $0<r<\infty$ such that
\[\frac 1r<\half+\frac {\alpha}{p_1}+\frac {\beta}{p_2}.\]
Moreover,
\beq\label{par1wew}\|T\|_{S_r}\leq \|E_1^{-\alpha}\|_{S_{\frac{p_1}{\alpha}}}\|E_2^{-\beta}\|_{S_{\frac{p_2}{\beta}}}\|(E_2)_x^{\beta}(E_1)_y^{\alpha}K\|_{L^2(\er^{n_2}\times\er^{n_1})}.\eq

The sequence of singular values $(s_j(T))_j$ satisfies the following
estimate for the rate of decay:
\[s_j(T)=o(j^{-(\half+\frac {\alpha}{p_1}+\frac {\beta}{p_2})}).\]
\end{cor}

\begin{ex}
Let us give a simple example for Corollary \ref{extahm1}: let $T:L^2(\er)\to L^2(\er)$ be an integral operator with kernel $K(x,y)$. Assume that $k,l\in\mathbb{N}$ and that
$K, \frac{d^{2k}}{dx^{2k}} K, x^{2l} K \in L^2(\er\times\er)$. Then $T\in S_r(L^2(\er))$ provided that $\frac{1}{r}<\frac12+\frac{2kl}{k+l}$. In particular, under the above assumptions $T$ is always a trace class operator.
\end{ex}
This statement follows immediately from Corollary \ref{extahm1} by taking 
$\alpha=0$ and $\beta=1$.

\begin{rem} We would like now to consider the special case of a negative order and negative potential, more precisely the case of the hydrogen atom, i.e. an operator of the form $H= -\Delta-c|x|^{-1}$ on $\er^3$ with  $c>0$. It is well known that the energy levels  are of the form 
\[E_n=-\frac{C}{n^2}\] 
where $C$ is a positive constant. In this case we can take $E^{-1}=H$ which belongs to the class $S_p$ with $p>\half$. Therefore, one can obtain a similar result to Corollary  \ref{extah} in terms of the operator $E$ with an index $p>\half$.
\end{rem}

\subsection{Subelliptic conditions on sub-Riemannian manifolds}
\label{SEC:subR}

In general, once the upper bound for the eigenvalue counting function of a certain operator is 
obtained, it can be used in Theorem \ref{extahk}. In particular, in some situations is may be convenient to use operators respecting certain geometric structures. Rather general results on the spectral asymptotics for self-adjoint subelliptic operators have been obtained by 
Fefferman and Phong \cite{FP-PNAS,FP-subelliptic-1983} as well as for 
operators with double characteristics by Menikoff and Sj\"ostrand \cite{MS-MA}, see also an overview on spectral asymptotics for rather general hypoelliptic operators by Sj\"ostrand \cite{Sj-ICM} and more recent extensions by Ponge \cite{Ponge-MAMS} and Hassannezhad and Kokarev 
\cite{HK2015}.
We can also refer to \cite{FR16} for subelliptic analysis on nilpotent groups and to \cite{RS-AM} for the potential theory for the sub-Laplacians.

\smallskip
Let us formulate several examples but  first we briefly recall a few definitions.
Let $M$ be a connected closed manifold and let $H\subset TM$ be a smooth sub-bundle of the tangent bundle satisfying the H\"ormander condition. we recall that the sub-bundle $H$ satisfies the  {\em H\"ormander condition} if for any point $x\in M$ and any local frame $\{X_i\}$ of $H$ around $x$, the iterated Lie brackets $[X_i,X_j]$, $[[X_i,X_j], X_k]$, 
$[X_i,[...[X_j, X_k]...]]$ at $x$ together with the vectors $\{X_i(x)\}$ span the  tangent space $T_xM$. The length of the Lie bracket above is understood as the number of vector fields involved. 
The sub-bundle $H$ is called regular if the dimensions of the strata in the stratification of $T_x M$ by commutators do not depend on $x\in M$. 
Let $g$ be a smooth metric on $H$ and let $Q$ be the Hausdorff dimension of $M$ with respect to the Carnot-Caratheodory distance associated to the sub-Riemannian manifold $(M,H,g)$.

We recall that the eigenvalue counting function of the sub-Laplacian on compact regular sub-Riemannian manifolds is estimated by $N(\lambda)\leq C\lambda^{Q/2}$, see e.g. \cite{HK2015}.
Consequently, Theorem \ref{extahk} immediately implies:

\begin{cor}\label{COR:subR}
Let $(M_i,H_i,g_i)$ $(i=1,2)$ be compact regular sub-Riemannian manifolds and let $\Delta_i$ be the sub-Laplacians associated to $H_i$. Let $Q_i$ denote the Hausdorff dimensions of $M_i$ with respect to the respective Carnot-Caratheodory distances.

Let 
$K\in L^2(M_2\times M_1)$ and let $T$ be the integral operator  from  
 $L^2(M_1)$ to $L^2(M_2)$ defined by
 \[(Tf)(x)=\int_{M_1} K(x,y)f(y)dy.\]
 Let $\alpha, \beta\geq 0$ and assume that $(\Delta_2)_x^{\beta}(\Delta_1)_y^{\alpha}K\in L^2(M_2\times M_1)$.
Then $T$ belongs to the Schatten-von Neumann class $S_r(L^2(M_1),L^2(M_2))$ for
all $0<r<\infty$ such that
\[\frac 1r<\half+\frac {2\alpha}{Q_1}+\frac {2\beta}{Q_2},\]
Moreover
\beq\label{par1wewq}\|T\|_{S_r}\leq \|\Delta_1^{-\alpha}\|_{S_{\frac{Q_1}{2\alpha}}}\|\Delta_2^{-\beta}\|_{S_{\frac{Q_2}{2\beta}}}\|(\Delta_2)_x^{\beta}(\Delta_1)_y^{\alpha}K\|_{ L^2(M_2\times M_1)
}.\eq

The sequence of singular values $(s_j(T))_j$ satisfies the following
estimate for the rate of decay:
\[s_j(T)=o(j^{-(\half+\frac {2\alpha}{Q_1}+\frac {2\beta}{Q_2})}).\]
\end{cor} 

Let us briefly record two examples that are of particular importance: of compact contact manifolds and of compact Lie groups.

We recall that a contact manifold is a smooth manifold $M$ of odd dimension $2n+1$ equipped with an 1-form $\theta$ such that $\theta\wedge (d\theta)^n$ is a volume form on $M$. The canonically induced bundle 
$H_x:=\{X\in T_x M: \theta(X)=0\}$ is regular and satisfies H\"ormander's condition since 2-form $d\theta$ is non-degenerate on $H$. This will be the setting (C1) in the following statement. The setting (C2) concerns sub-Laplacians on compact Lie groups in which case the canonical sub-bundle is also regular due to the left-invariance.

\begin{cor}\label{COR:subexamples}
Let us consider the following situations:
\begin{enumerate}
\item[(C1)] Let $\Omega_i$ be a compact contact metric manifold of dimension $2n_i+1$, $(i=1,2)$.
Let $E_i:=(I+\Delta_i)^{\alpha_i}$ for $i=1,2$, where $\Delta_i$ is the canonical positive sub-Laplacian on $\Omega_i$.  Let $p_i:=n_i+1,\,\, i=1,2.$ 

\item[(C2)] Let $\Omega_i$ be a compact Lie group with left-invariant positive sub-Laplacian $\mathcal{L}_i$, and let $Q_i$ be the Hausdorff dimension of the induced control distance. 
Let $E_i:=(I+\mathcal{L}_i)^{\alpha_i}$ and let $p_i:=\frac{Q_i}{2},\,\, i=1,2.$ 
\end{enumerate}
Let  $K\in L^2(\Omega_2\times\Omega_1)$ and let $T$ be the integral operator  from  
 $L^2(\Omega_1)$ to $L^2(\Omega_2)$ defined by
 \[(Tf)(x)=\int_{\Omega_1} K(x,y)f(y)d\mu_1(y).\]
Let $\alpha,\beta\geq 0$ and suppose that $(E_2)_x^{\beta}(E_1)_y^{\alpha}K\in L^2(\Omega_2\times\Omega_1)$ under the corresponding assumptions either {\rm (C1)} or {\rm (C2)}.   
Then $T$ belongs to the Schatten-von Neumann class $S_r(L^2(\Omega_1),L^2(\Omega_2))$ for
all $0<r<\infty$ such that
\[\frac 1r<\half+\frac {\alpha_1}{p_1}+\frac {\alpha_2}{p_2}.\]

Moreover, the sequence of singular values $(s_j(T))_j$ satisfies the following
estimate for the rate of decay:
\[s_j(T)=o(j^{-(\half+\frac {\alpha_1}{p_1}+\frac {\alpha_2}{p_2})}).\]
\end{cor}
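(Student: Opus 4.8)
The strategy is to obtain both cases (C1) and (C2) from Part~(i) of Theorem~\ref{extahk} in exactly the way Corollary~\ref{COR:subR} was deduced; the entire content of the proof is to verify the structural hypotheses of that theorem for the operators $E_1,E_2$ (and for the powers $E_1^\alpha,E_2^\beta$ entering the assumption on $K$) and to compute the exponents $p_i$ appearing in the polynomial bound \eqref{nilq}.

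\emph{Step 1: spectral structure of the sub-Laplacians and of their fractional powers.} In both settings $\Delta_i$, resp. $\mathcal{L}_i$, is a nonnegative, formally self-adjoint, second-order operator built from a bracket-generating family of smooth vector fields (left-invariant, in case (C2)) on a connected compact manifold, hence it is subelliptic and essentially self-adjoint on smooth functions. Its closure has compact resolvent: $(I+\Delta_i)^{-1}$ maps $L^2$ boundedly into a subelliptic Sobolev space which, on a compact manifold, embeds compactly into $L^2$. Therefore the spectrum of $\Delta_i$ is a discrete sequence $0=\mu_{0,i}<\mu_{1,i}\leq\mu_{2,i}\leq\cdots\to\infty$ with an orthonormal eigenbasis of $L^2$, the eigenvalue $0$ being simple (constant eigenfunction) by connectedness. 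Consequently $I+\Delta_i$, resp. $I+\mathcal{L}_i$, has the strictly positive discrete spectrum $\{1+\mu_{k,i}\}_k$ with the same eigenbasis, and for every exponent $s>0$ the power $(I+\Delta_i)^{s}$ defined by the spectral theorem is essentially self-adjoint, invertible, with strictly positive discrete spectrum $\{(1+\mu_{k,i})^{s}\}_k$ and the same eigenbasis. This applies in particular to $E_i$ and to $E_1^\alpha,E_2^\beta$, so all the structural requirements of Theorem~\ref{extahk} are met.

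\emph{Step 2: the eigenvalue counting functions.} For a compact regular sub-Riemannian manifold the counting function of the sub-Laplacian satisfies $N(\lambda)\leq C\lambda^{Q/2}$, $Q$ being the Hausdorff dimension, as recalled before Corollary~\ref{COR:subR}. In case (C1) the canonical bundle $H_x=\ker\theta_x$ has rank $2n_i$, and since $d\theta$ is nondegenerate on $H$ the first-order brackets already span $T_xM$; hence the stratification of $T_xM$ has the $x$-independent type $(2n_i,\,2n_i+1)$, $H$ is regular, and $Q_i=2n_i\cdot 1+1\cdot 2=2n_i+2$, so that $Q_i/2=n_i+1=p_i$. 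In case (C2) the distribution is regular by left-invariance and $Q_i$ is, by definition, the Hausdorff dimension of the induced control distance, so $Q_i/2=p_i$. In either case $N_{I+\Delta_i}(\lambda)=N_{\Delta_i}(\lambda-1)\leq C(1+\lambda)^{p_i}$, and since $\nu$ is an eigenvalue of $I+\Delta_i$ exactly when $\nu^{s}$ is one of $(I+\Delta_i)^{s}$,
\[
N_{(I+\Delta_i)^{s}}(\lambda)=N_{I+\Delta_i}\big(\lambda^{1/s}\big)\leq C\big(1+\lambda^{1/s}\big)^{p_i}\leq C'(1+\lambda)^{p_i/s}
\]
for every $s>0$, which is precisely the estimate \eqref{nilq}, with exponent $p_i/s$.

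\emph{Step 3: conclusion.} Inserting the bounds of Step~2 for the operators acting on the $x$- and the $y$-variable into Part~(i) of Theorem~\ref{extahk}, together with the hypothesis $(E_2)_x^\beta(E_1)_y^\alpha K\in L^2(\Omega_2\times\Omega_1)$, immediately gives that $T\in S_r(L^2(\Omega_1),L^2(\Omega_2))$ in the stated range of $r$, as well as the asserted $o(\cdot)$ decay of the singular numbers $s_j(T)$. The step requiring the most care is Step~1 together with the dimension bookkeeping in Step~2 (establishing regularity of the contact distribution and the value $Q=2n+2$); once the subelliptic spectral theory and the Weyl-type upper bound $N(\lambda)\lesssim\lambda^{Q/2}$ are available, the remaining argument is the same rescaling of counting functions already carried out in the proofs of Corollaries~\ref{extah} and \ref{COR:subR}.
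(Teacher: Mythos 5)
Your proposal is correct and follows essentially the same route as the paper, which deduces Corollary \ref{COR:subexamples} directly from Theorem \ref{extahk} together with the sub-Laplacian eigenvalue bound $N(\lambda)\leq C\lambda^{Q/2}$ and the identification $Q=2n+2$ (hence $Q/2=n+1$) in the contact case. Your Steps 1--2 merely make explicit the self-adjointness/discrete-spectrum hypotheses and the rescaling of counting functions under fractional powers, which the paper leaves implicit by calling the result ``an immediate consequence.''
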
 
Again, Corollary \ref{COR:subexamples} is an immediate consequence of Theorem \ref{extahk} and the corresponding spectral asymptotics results.

Let us mention two further important special cases of the settings (C1) and (C2) of 
Corollary \ref{COR:subexamples}:

\smallskip 
(C1) In particular, the result of the setting (C1) also holds with the same indices if any of the manifolds $M_i$ is a connected orientable compact strictly pseudo-convex CR manifold of dimension $2n+1$. For the required  spectral asymptotics see, e.g. \cite{Kokarev-CR}. In this case one considers  asymptotics for the counting function of a sub-Laplacian corresponding to a pseudo-Hermitian structure.\smallskip

(C2) If $\mathcal{L}_i$ is not the sub-Laplacian but a Laplacian (Casimir element) on a compact Lie group $\Omega_i$ of dimension $n_i$ then we have $Q_i=n_i$.
In the setting of operators on  a compact Lie group $G$ conditions for the membership in Schatten-von Neumann classes were given in \cite{dr13:schatten}
also in terms of global matrix symbols on $G\times\widehat{G}$. We can refer to \cite{rt:book, rt:groups} for the corresponding analysis and its relations to the representation theory of compact Lie groups. 

\smallskip
In analogy to \eqref{EQ:mixed-Sobolev}, the above conditions on the kernel can be also formulated in terms of the (mixed) Sobolev spaces associated to the sub-Laplacians.
The embeddings between these Sobolev spaces and the usual ones can be obtained from   a suitable $S(m,g)$ calculus when available. However, we can note that already for the Sobolev spaces associated to harmonic oscillators, those Sobolev spaces take into account also decay properties at infinity, while the usual Sobolev spaces do not. So, one can compare these spaces locally (but there terms like $|x|^2$ do not play any role), but globally there may be embeddings only in one directions.\\

\medskip

\noindent{\bf Acknowledgments}

The authors were supported by the Leverhulme Research Grant RPG-2017-151, by the 
 FWO Odysseus Project G.0H94.18N: Analysis and Partial Differential Equations, and by the EPSRC grant EP/R003025. The first author was also supported by Vic. Inv Universidad del Valle. Grant No. CI-71281.  The authors would also like to thank anonymous referees for the valuable comments helping to improve the results, the presentation of the manuscript and in particular to simplify the proof of Theorem \ref{ext322}.

%\bibliographystyle{plain}

%\bibliography{bib-Delgado-sc-17}

\bibliographystyle{alphaabbr}

\end{document}